\newtheorem{theorem}{Theorem}
\newtheorem{condition}[theorem]{Condition}
\newtheorem{definition}[theorem]{Definition}
\newtheorem{example}[theorem]{Example}
\newtheorem{lemma}[theorem]{Lemma}
\newtheorem{proposition}[theorem]{Proposition}
\newtheorem{remark}[theorem]{Remark}
\begin{document}

\title[Symmetrization inequalities for convex profile]{Symmetrization inequalities for probability metric spaces with convex isoperimetric profile}
\author{Joaquim Mart\'{i}n$^{\ast}$}
\address{Department of Mathematics\\
Universitat Aut\`{o}noma de Barcelona} \email{jmartin@mat.uab.cat}
\author{Walter A.  Ortiz**}
\address{Department of Mathematics\\
Universitat Aut\`{o}noma de Barcelona} \email{waortiz@mat.uab.cat}
\thanks{$^{\ast}$Partially supported by Grants MTM2016-77635-P, MTM2016-75196-P (MINECO) and 2017SGR358}
\thanks{**Partially supported by Grant MINECO MTM2016-77635-P}
\thanks{This paper is in final form and no version of it will be submitted for
publication elsewhere.} \subjclass[2000]{Primary 46E35.}

\keywords{Sobolev-Poincar\'{e} inequalities, Nash inequalities,
symmetrization, isoperimetric convex profile }
\begin{abstract}
We obtain symmetrization inequalities on probability metric spaces
with convex isoperimetric profile which incorporate in their
formulation the isoperimetric estimator and that can be applied to
provide a unified treatment of sharp Sobolev-Poincar\'{e} and Nash
inequalities.
\end{abstract}
\maketitle
\section{Introduction}

Let $\left(  \Omega,d,\mu\right)  $ be a connected metric space
equipped with a separable Borel probability measure $\mu$. The
perimeter or Minkowski content of a Borel set $A\subset\Omega,$ is
defined by
\[
\mu^{+}(A)=\liminf_{h\rightarrow0}\frac{\mu\left(  A_{h}\right)  -\mu\left(
A\right)  }{h},
\]
where $A_{h}=\left\{  x\in\Omega:d(x,A)<h\right\}$ is the open
$h-$neighborhood of $A.$ The \textbf{isoperimetric profile}
$I_{\mu}$ is defined as the pointwise maximal function
$I_{\mu}:[0,1]\rightarrow\left[  0,\infty\right)  $ such that
\[
\mu^{+}(A)\geq I_{\mu}\left(  \mu(A)\right)  ,
\]
holds for all Borel sets $A$. An isoperimetric inequality measures
the relation between the boundary measure and the measure of a set,
by providing a lower bound on $I_{\mu}$ by some function
$I:[0,1]\rightarrow\left[ 0,\infty\right)  $ which is not
identically zero.

The modulus of the gradient of a Lipschitz function $f$ on $\Omega$ \ (briefly
$f\in Lip(\Omega))$ is defined by
\[
\left|  \nabla f(x)\right|  =\limsup_{d(x,y)\rightarrow0}\frac{|f(x)-f(y)|}%
{d(x,y)}.
\]

The equivalence between isoperimetric inequalities and Poincar\'{e}
inequalities was obtained by Maz'ya. Maz'ya's method (see
\cite{Maz}, \cite{MM2} and \cite{CMP}) shows that given
$X=X(\Omega)$ a rearrangement invariant space\footnote{i.e. such
that if $f$ and $g$ have the same distribution function then
$\left\| f\right\| _{X}=\left\| g\right\|  _{X}$ (see Section
\ref{ri} below).}, the inequality
\begin{equation}
\left\|  f-\int_{\Omega}fd\mu\right\|  _{X}\leq c\left\|  \left|  \nabla
f\right|  \right\|  _{L^{1}},\text{ }f\in Lip(\Omega), \label{a3}%
\end{equation}
holds, if and only if, there exists a constant $c=c(\Omega)>0$ such that for
all Borel sets $A\subset \Omega,$%
\begin{equation}
\min\left(  \phi_{X}(\mu(A)),\phi_{X}(1-\mu(A))\right)  \leq c\mu^{+}(A),
\label{a4}%
\end{equation}
where $\phi_{X}(t)$ is the fundamental function\footnote{We can
assume with no loss of generality that $\phi_{X}$ is
concave.} of $X:$%
\[
\phi_{X}(t)=\left\|  \chi_{A}\right\|  _{X},\text{ with }\mu(A)=t.
\]
Motivated by this fact, we will say $\left(  \Omega,d,\mu\right)  $ admits a
concave isoperimetric estimator if there exists a function $I:[0,1]\rightarrow
\left[  0,\infty\right)  $ continuous, concave, increasing on $(0,1/2)$,
symmetric about the point $1/2$, $I(0)=0$ and $I(t)>0$ on $(0,1)$, such that%
\[
I_{\mu}(t)\geq I(t),\text{ }0\leq t\leq1.
\]

In their recent work M. Milman and J. Mart\'{\i}n\footnote{J. Martin
is grateful to professor Mario Milman for introducing him, around
2005, in the study of the conexion between rearrangements and
isoperimetry.}  (see \cite{mmadv}, \cite{mmast}) proved
 that $\left(  \Omega,d,\mu\right)  $ admits a concave isoperimetric estimator $I,$ if, and only if, the
following symmetrization inequality holds%
\begin{equation}
f_{\mu}^{\ast\ast}(t)-f_{\mu}^{\ast}(t)\leq\frac{t}{I(t)}\left|  \nabla
f\right|  _{\mu}^{\ast\ast}(t),\text{ (}f\in Lip(\Omega)) \label{int4}%
\end{equation}
where $f_{\mu}^{\ast\ast}(t)=\frac{1}{t}\int_{0}^{t}f_{\mu}^{\ast}(s)ds,$ and
$f_{\mu}^{\ast}$ is the non increasing rearrangement of $f$ with respect to
the measure $\mu$. If we apply a rearrangement invariant function norm $X\ $on
$\Omega$ (see Sections \ref{rea} and \ref{ri} below) to (\ref{int4}) we obtain
Sobolev-Poincar\'{e} type estimates of the form\footnote{The spaces $\bar{X}$
are defined in Section \ref{ri} below.}%
\begin{equation}
\left\|  \left(  f_{\mu}^{\ast\ast}(t)-f_{\mu }^{\ast}(t)\right)
\frac{I(t)}{t}\right\|  _{\bar{X}}\leq\left\|  \left|
\nabla f\right|  _{\mu}^{\ast\ast}\right\|  _{\bar{X}}. \label{int5'}%
\end{equation}

\begin{example}
(see \cite{mmp}, \cite{mmgaus}) Let $\Omega\subset \mathbb{R}^{n}$
be a Lipschitz  domain of measure $1$, $X=L^{p}\left( \Omega\right)
,$ $1\leq p\leq n,$ and $p^{\ast}$ be the usual Sobolev exponent
defined by $\frac{1}{p^{\ast}}=\frac{1}{p}-\frac{1}{n},$
then\footnote{Here the symbol $f\simeq g$ indicates the existence of
a universal constant $c>0$ (independent of all parameters involved)
such that $(1/c)f\leq g\leq c\,f$. Likewise the symbol $f\preceq g$
will mean that there exists a universal constant $c>0$ (independent
of all parameters involved) such that $f\leq c\,g$.}
\begin{equation}
\left\|  \left(  f^{\ast\ast}(t)-f^{\ast}(t)\right)
\frac{I(t)}{t}\right\| _{L^{p}}\simeq\left\|  \left(
f^{\ast\ast}(t)-f^{\ast}(t)\right)  \right\|
_{L^{p^{\ast},p}}, \label{tarde}%
\end{equation}
 follows from the fact that the
isoperimetric profile is equivalent to
$I(t)=c_{n}\min(t,1-t)^{1-1/n},$ and Hardy's inequality (here
$L^{p^{\ast},p}$ is a Lorentz space (see Section \ref{preliminar}
below)). In case that we consider $\mathbb{R}^{n}$ with Gaussian
measure $\gamma_{n}$, and let $X=L^{p},$ $1\leq p<\infty,$ then
(compare with \cite{gro}, \cite{fei}), since
$I_{(\mathbb{R}^{n},d,\gamma_{n})}(t)\simeq t(\log1/t)^{1/2}$ for
$t$ near
zero, we have%
\begin{equation}
\left\|  \left(
f_{\gamma_{n}}^{\ast\ast}(t)-f_{\gamma_{n}}^{\ast}(t)\right)
\frac{I(t)}{t}\right\|  _{L^{p}}\simeq\left\|  \left(  f_{\gamma_{n}}%
^{\ast\ast}(t)-f_{\gamma_{n}}^{\ast}(t)\right)  \right\|
_{L^{p}(Log)^{p/2}},
\label{tarde1}%
\end{equation}
where $L^{p}(logL)^{p/2}$ is a Lorentz-Zygmund space (see Section
\ref{preliminar}).
\end{example}

In this fashion in \cite{mmadv}, \cite{mmast}, \cite{mmp} and
\cite{mmgaus}, M. Milman in collaboration with the first author were
able to provide a unified framework to study the classical Sobolev
inequalities and logarithmic Sobolev inequalities, moreover
embeddings (\ref{int5'}) turn out to be the best possible in all the
classical cases. However the method used in the proof of the
previous results cannot be applied with probability measures with
heavy tails, since isoperimetric estimators of such measures are non
concave. Let us illustrate this phenomenon with some examples (see
\cite[Propositions 4.3 and 4.4]{CGGR} for examples \ref{ex1} and
\ref{ex2} and \cite{Mil} for example \ref{ex3}).

\begin{example}
\label{ex1}($\alpha-$Cauchy type law). Let $\alpha>0.$ Consider the
probability measure space $(\mathbb{R}^{n},d,\mu)$ where $d$ is the
Euclidean distance and $\mu$ is
defined by $d\mu(x)=V^{-(n+\alpha)}dx$ with $V:$ $\mathbb{R}^{n}%
\rightarrow(0,\infty)$ convex. Then there exits $C>0$ such that
for any measurable set $A\subset\mathbb{R}^{n}$%
\[
\mu^{+}(A)\geq C\min\left(  \mu(A),1-\mu(A)\right)  ^{1+1/\alpha}.
\]
\end{example}

\begin{example}
\label{ex2}(Extended $p$-sub-exponential law). Let $p\in (0,1)$.
Consider the probability measure on $\mathbb{R}^{n}$ defined by
$d\mu(x)=\left( 1/Z_{p}\right)  e^{-V^{p}(x)}dx$ for some positive
convex function $V:\mathbb{R}^{n}\to (0,\infty)$, then there exits
$C>0$
such that for any measurable set $A\subset\mathbb{R}^{n}$%
\[
\mu^{+}(A)\geq C\min\left(  \mu(A),1-\mu(A)\right)  \left(  \log\frac{1}%
{\min\left(  \mu(A),1-\mu(A)\right)  }\right)  ^{1-1/p}.
\]
\end{example}

\begin{example}
\label{ex3} Let $\left(  M^{n},g,\mu\right)  $ be a $n-$dimensional weighted
Riemannian manifold ($n\geq2)$ that satisfies the $CD(0,N)$ curvature
condition with $N<0.$ Then for every Borel set $A\subset\left(  M^{n}%
,g\right)  $
\[
\mu^{+}(A)\geq C\min\left(  \mu(A),1-\mu(A)\right)  ^{-1/N}.
\]
\end{example}

Motivated by these examples, we will say $\left(  \Omega,d,\mu\right)  $
admits a convex isoperimetric estimator if there exists a function
$I:[0,1]\rightarrow\left[  0,\infty\right)  $ continuous, convex, increasing
on $(0,1/2)$, symmetric about the point $1/2$, $I(0)=0$ and $I(t)>0$ on
$(0,1)$, such that%
\[
I_{\mu}(t)\geq I(t),\text{ }0\leq t\leq1.
\]

The purpose of this paper is to obtain symmetrization inequalities
on probability metric spaces that admit a convex isoperimetric
estimator which incorporate in their formulation the isoperimetric
estimator and that can be applied to provide a unified treatment of
sharp Sobolev-Poincar\'{e} and Nash type
inequalities. Notice that if $I$ is a convex isoperimetric estimator, then%
\[
I(t) \preceq \min\left(  t,1-t\right) 
\]
therefore (unless $I(t)\simeq\min\left(  t,1-t\right)  ),$ the Poincar\'{e}
inequality
\[
\left\|  f-\int_{\Omega}fd\mu\right\|  _{L^{1}}\leq c\left\|  \left|  \nabla
f\right|  \right\|  _{L^{1}},f\in Lip(\Omega),
\]
never holds, that means from $\left|  \nabla f\right|  \in L^{1}$ we
cannot deduce that $f\in L^{1},$ hence a symmetrization inequality
like (\ref{int4}) will not be possible  since $f_{\mu}^{\ast\ast}$
is defined if, and only if, $f\in L^1$.

The paper is organized as follows. In Section \ref{preliminar}, we
introduce the notation and the standard assumptions used in the
paper. This Section also contains the basic background from the
theory of  rearrangement invariant spaces that we will need. In
Section \ref{simetr} we obtain symmetrization inequalities  which
incorporate in their formulation the isoperimetric convex
estimator.\ In Section \ref{discussion} we use the symmetrization
inequalities to derive Sobolev-Poincar\'{e} and Nash type
inequalities. Finally in Section \ref{five} we study in detail
Examples \ref{ex1}, \ref{ex2} and \ref{ex3}.

\section{Preliminaries\label{preliminar}}

We recall briefly the basic definitions and conventions we use from
the theory of rearrangement-invariant (r.i.) spaces and refer the
reader to \cite{BS}, \cite{KPS} and \cite{Rak} for a complete
treatment.

We shall consider a connected measure metric spaces $\left(
\Omega,d,\mu\right)  $ equipped with a  separable, non-atomic,
probability Borel measure $\mu.$  Let $\mathcal{M}(\Omega)$ be the
set of all extended real-valued measurable functions on $\Omega$. By
$\mathcal{M}_0(\Omega)$ we denote the class of functions in
$\mathcal{M}(\Omega)$ that are finite $\mu-$ a.e.

\subsection{Rearrangements \label{rea}}

For $u\in\mathcal{M}_0(\Omega),$ the distribution
function\footnote{Note that this notation is somewhat
unconventional. In the literature it is common to denote the
distribution function of $\left|  u\right|  $ by $\mu_{u},$ while
here it is denoted by $\mu_{|u|}$ since we need to distinguish
between the rearrangements of $u$ and $\left| u\right| .$ } of $u$
is given by
\[
\mu_{u}(t)=\mu\{x\in{\Omega}:u(x)>t\}\text{ \ \ \ \ }(t\in\mathbb{R}).
\]
The \textbf{decreasing rearrangement} of a function $u$ is the
right-continuous non-increasing function from $[0,1)$ into $[0,\infty)$ which
is equimeasurable with $u,$ i.e.
\[
\mu_{\left|  u\right|  }(t)=\mu\{x\in{\Omega}:\left|  u(x)\right|
>t\}=m\left\{  s\in(0,1):u_{\mu}^{\ast}(s)>t\right\}  \text{ , \ }%
t\in\mathbb{R}%
\]
(where $m$ denotes the Lebesgue measure on $(0,1)$), and can be defined by the
formula%
\[
u_{\mu}^{\ast}(s)=\inf\{t\geq0:\mu_{\left|  u\right|  }(t)\leq s\}.
\]
The \textbf{signed decreasing rearrangement} of $f$ is defined by
\[
u_{\mu}^{\bigstar}(t)=\inf\left\{  s\in\mathbb{R}:\mu\{x\in\Omega:\mu
_{u}(x)>s\}\leq t\right\}  ,
\]
It follows readily from the definition that
\begin{equation}
u_{\mu}^{\bigstar}(0^{+})=ess\sup u\text{ \ \ and \ \ \ }u_{\mu}^{\bigstar
}(\infty)=ess\inf u. \label{signed}%
\end{equation}

The maximal function $u_{\mu}^{\ast\ast}\ $of $u_{\mu}^{\ast}$ is defined by
\[
u_{\mu}^{\ast\ast}(t)=\frac{1}{t}\int_{0}^{t}u_{\mu}^{\ast}(s)ds=\frac{1}%
{t}\sup\left\{  \int_{E}u(s)d\mu:\mu(E)=t\right\}  .
\]
This operation is subadditive, i.e.
\begin{equation}
\left(  u+v\right)  _{\mu}^{\ast\ast}(s)\leq u_{\mu}^{\ast\ast}(s)+v_{\mu
}^{\ast\ast}(s). \label{a2}%
\end{equation}
Moreover, since $u_{\mu}^{\ast}$ is decreasing, $u_{\mu}^{\ast\ast}$ is also
decreasing and $u_{\mu}^{\ast}\leq u_{\mu}^{\ast\ast}$.

When no confusion ensues, because the measure is clear from the context, or we
are dealing with Lebesgue measure, we may simply write $u^{\ast}$ and
$u^{\ast\ast}$.

\begin{definition}
Let $f\in\mathcal{M}_0(\Omega)$. We say
that $m(f)$ is a median value if
\[
\mu\{f\geq m(f)\}\geq1/2;\text{ and }\mu\{f\leq m(f)\}\geq1/2.
\]
\end{definition}
It is easy to see (see for example \cite{mmadv}) that
$f_{\mu}^{\bigstar}(1/2)$ is a median of $f.$ Moreover, if $f$ has
$0$ median and $f_{\mu}^{\bigstar}$ is continuous then $f_{\mu
}^{\bigstar}(1/2)=0.$

\subsection{Rearrangement invariant spaces\label{ri}}

We say that a Banach function space $X=X({\Omega})$ on $({\Omega},d,\mu)$ is
rearrangement-invariant (r.i.) space, if
\begin{equation}
f\in X\text{ and }g\text{ is a  }\mu-\text{measurable such that
}f_{\mu}^{\ast }=g_{\mu}^{\ast}\Rightarrow g\in X\text{ and }\Vert
f\Vert_{X}=\Vert
g\Vert_{X}. \label{riri}%
\end{equation}

If, in the definition of a norm, the triangle inequality is weakened to the
requirement that for some constant $C_{X}$, $\left\|  x+y\right\|  _{X}\leq
C_{X}(\left\|  x\right\|  _{X}+\left\|  y\right\|  _{X})$ holds for all $x$
and $y$, then we have a quasi-norm. A complete quasi-normed space is called a
quasi-Banach space. We will say that $X$ is a quasi Banach
rearrangement-invariant (q.r.i.) space if (\ref{riri}) holds.

If $X$ is a r.i space, since $\mu(\Omega)=1,$ for any r.i. space $X({\Omega})$
we have%
\begin{equation}
L^{\infty}(\Omega)\subset X(\Omega)\subset L^{1}(\Omega), \label{nuevadeli}%
\end{equation}
with continuous embeddings.

A r.i. space $X({\Omega})$ can be represented by a r.i. space on the interval
$(0,1),$ with Lebesgue measure, $\bar{X}=\bar{X}(0,1),$ such that%
\[
\Vert f\Vert_{X}=\Vert f_{\mu}^{\ast}\Vert_{\bar{X}},
\]
for every $f\in X.$ A characterization of the norm $\Vert\cdot\Vert_{\bar{X}}$
is available (see \cite[Theorem 4.10 and subsequent remarks]{BS}).

A useful property of r.i. spaces states that if
\[
\int_{0}^{r}f_{\mu}^{\ast}(s)ds\leq\int_{0}^{r}g_{\mu}^{\ast}(s)ds,\text{
\ holds for all\ }r>0,
\]
then, for any Banach r.i. space $X=X({\Omega}),$%
\[
\left\|  f\right\|  _{X}\leq\left\|  g\right\|  _{X}.
\]

Classically conditions on r.i. spaces can be formulated in terms of the
boundedness of the Hardy operators defined by
\[
Pf(t)=\frac{1}{t}\int_{0}^{t}f(s)ds;\text{ \ \ \ }Qf(t)=\int_{t}^{1  }f(s)\frac{ds}{s}.
\]
The boundedness of these operators on r.i. spaces can be best described in
terms of the so called \textbf{Boyd indices}\footnote{Introduced by D.W. Boyd
in \cite{boyd}.} defined by
\begin{equation}
\bar{\alpha}_{X}=\inf\limits_{s>1}\dfrac{\ln h_{X}(s)}{\ln s}\text{ \ \ and
\ \ }\underline{\alpha}_{X}=\sup\limits_{s<1}\dfrac{\ln h_{X}(s)}{\ln s},
\label{boy}%
\end{equation}
where $h_{X}(s)$ denotes the norm of the compression/dilation operator $E_{s}$
on $\bar{X}$, defined for $s>0,$ by
\[
E_{s}f(t)=\left\{
\begin{array}
[c]{ll}%
f^{\ast}(\frac{t}{s}) & 0<t<s,\\
0 & s<t<1.
\end{array}
\right.
\]
It is well known that
\begin{equation}%
\begin{array}
[c]{c}%
P\text{ is bounded on }\bar{X}\text{ }\Leftrightarrow\overline{\alpha}%
_{X}<1,\\
Q\text{ is bounded on }\bar{X}\text{ }\Leftrightarrow\underline{\alpha}_{X}>0.
\end{array}
\label{alcance}%
\end{equation}

The next Lemma will be useful in what follows (see \cite[Lemma
1]{Mar1}).

\begin{lemma}
\label{indices}Let $Y$ be a r.i. space on $\left(  0,1\right)  .$
Let $\phi_{Y}$ be its fundamental function. Assume that
$\phi_{Y}(0)=0.$ Then
\end{lemma}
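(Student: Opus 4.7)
The plan is to exploit the two canonical ways of encoding size in a rearrangement invariant space: the fundamental function $\phi_{Y}(t)=\|\chi_{(0,t)}\|_{\bar{Y}}$ on one side, and the dilation operator $E_{s}$ with its norm $h_{Y}(s)$ on the other. First I would record the standard fact that the fundamental function of any r.i.\ space is quasi-concave, i.e.\ $\phi_{Y}$ is non-decreasing while $\phi_{Y}(t)/t$ is non-increasing. Combined with the hypothesis $\phi_{Y}(0)=0$, this forces $\phi_{Y}$ to be continuous on $[0,1]$ and strictly positive on $(0,1]$, which lets me treat $\phi_{Y}$ as a genuine weight rather than just a formal object.

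Next I would connect $\phi_{Y}$ to $h_{Y}$ and the Boyd indices via the elementary identity $\chi_{(0,st)}=E_{s}\chi_{(0,t)}$ valid for $0<s,t$ with $st\le 1$. Applying the $\bar{Y}$-norm gives $\phi_{Y}(st)\leq h_{Y}(s)\,\phi_{Y}(t)$, from which the definition (\ref{boy}) produces two-sided comparisons between the Boyd indices $\overline{\alpha}_{Y},\underline{\alpha}_{Y}$ and the dilation exponents of the scalar function $\phi_{Y}$. Using the Hardy operator criterion (\ref{alcance}), I would then transfer these into boundedness statements for $P$ and $Q$ on $\bar{Y}$ expressed directly in terms of $\phi_{Y}$, which is the form required in the later sections to apply the symmetrization inequality (\ref{int4}) against a general isoperimetric profile $I$.

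To close the loop I would compare $Y$ against its natural endpoint spaces built from $\phi_{Y}$: the Lorentz space $\Lambda(\phi_{Y})$ and the Marcinkiewicz space $M(\phi_{Y})$, using the continuous embeddings $\Lambda(\phi_{Y})\hookrightarrow Y\hookrightarrow M(\phi_{Y})$ together with the Hardy--Littlewood inequality. This lets any quantitative information about test functions $\phi_{Y}(t)$-weighted characteristic slices be pushed back into statements about $\|\cdot\|_{\bar{Y}}$ for arbitrary elements.

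The main obstacle I anticipate is the reverse direction of this comparison: upper bounds on $\|f\|_{\bar{Y}}$ in terms of quantities involving $\phi_{Y}$ do not follow from equimeasurability alone. Handling this will require the Marcinkiewicz endpoint inclusion in a sharp form, being careful that $\phi_{Y}(0)=0$ is exactly the hypothesis that guarantees absolute continuity of the norm on characteristic functions and thus rules out the pathological case (think $L^{\infty}$) where such an upper bound must fail; this is the one place where the hypothesis is genuinely used rather than being cosmetic.
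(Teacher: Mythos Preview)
The paper does not supply its own proof of this lemma: it is quoted with a pointer to \cite[Lemma~1]{Mar1}. More importantly, the statement you were handed is truncated---in the source the \texttt{lemma} environment closes after the word ``Then'' and the actual conclusions sit in an \texttt{enumerate} immediately below: (1) if $\overline{\alpha}_{Y}<1$ then $\phi_{Y}(s)/s^{\gamma}$ is almost decreasing for each $\overline{\alpha}_{Y}<\gamma<1$; (2) if $\underline{\alpha}_{Y}>0$ then $\phi_{Y}(s)/s^{\gamma}$ is almost increasing for each $0<\gamma<\underline{\alpha}_{Y}$; (3) if $\underline{\alpha}_{Y}>0$ there is a concave $\hat{\phi}_{Y}\simeq\phi_{Y}$ with $c^{-1}\phi_{Y}(t)/t\le \hat{\phi}_{Y}{}'(t)\le c\,\phi_{Y}(t)/t$.

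Your central observation $\phi_{Y}(st)\leq h_{Y}(s)\,\phi_{Y}(t)$, obtained from $E_{s}\chi_{(0,t)}=\chi_{(0,st)}$, is exactly the device that proves (1) and (2): submultiplicativity of $h_{Y}$ together with the definition (\ref{boy}) yields $h_{Y}(s)\leq C\,s^{\gamma}$ for $s>1$ whenever $\gamma>\overline{\alpha}_{Y}$, and $h_{Y}(s)\leq C\,s^{\gamma}$ for $0<s<1$ whenever $\gamma<\underline{\alpha}_{Y}$; inserting these into the displayed inequality gives the claimed almost-monotonicity of $\phi_{Y}(s)/s^{\gamma}$ immediately. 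Part (3) then follows from (2) by a routine concave regularization of the quasi-concave $\phi_{Y}$.

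Everything in your last two paragraphs---the Hardy criterion (\ref{alcance}), the sandwich $\Lambda(\phi_{Y})\hookrightarrow Y\hookrightarrow M(\phi_{Y})$, and the ``reverse direction'' obstacle---is off-target. The lemma is a statement about the \emph{scalar} function $\phi_{Y}$ and its power-law growth; it asserts no norm inequality on $Y$ for general $f$, so there is nothing to push back from the Marcinkiewicz endpoint, and the $L^{\infty}$-type pathology you worry about simply does not enter. Drop that material and your sketch becomes a complete argument for (1)--(2).
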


\begin{enumerate}
\item  If $\overline{\alpha}_{Y}<1$, then for every $\overline{\alpha}%
_{Y}<\gamma<1$ the function $\mathbb{\phi}_{Y}(s)/s^{\gamma}$ is
almost decreasing (i. e. $\exists c>0$ s.t.
$\mathbb{\phi}_{Y}(s)/s^{\gamma}\leq
c\mathbb{\phi}_{Y}(t)/t^{\gamma}$ whenever $t\leq s).$

\item  If $\underline{\alpha}_{Y}>0,$ then for every $0<\gamma<\underline
{\alpha}_{Y}$ the function $\mathbb{\phi}_{Y}(s)/s^{\gamma}$ is
almost increasing (i. e. $\exists c>0$ s.t.
$\mathbb{\phi}_{Y}(s)/s^{\gamma}\leq
c\mathbb{\phi}_{Y}(t)/t^{\gamma}$ whenever $t\geq s).$

\item  If $\underline{\alpha}_{Y}>0,$ there exists a concave function
$\mathbb{\hat{\phi}}_{Y}$ and constant $c>0$ such that
\[
\mathbb{\hat{\phi}}_{Y}(t)\simeq\mathbb{\phi}_{Y}(t)\text{ \ \ and \ }%
c^{-1}\mathbb{\phi}_{Y}(t)/t\leq\frac{\partial}{\partial t}\mathbb{\hat{\phi}%
}_{Y}(t)\leq c\mathbb{\phi}_{Y}(t)/t.
\]
\end{enumerate}

Associated with an r.i. space $X$ there are some useful Lorentz and
Marcinkiewicz spaces, namely the Lorentz and Marcinkiewicz spaces
defined by the quasi-norms
\[
\left\|  f\right\|  _{M(X)}=\sup_{t}f_{\mu}^{\ast}(t)\phi_{X}(t),\text{
\ \ }\left\|  f\right\|  _{\Lambda(X)}=\int_{0}^{1}f_{\mu}^{\ast}%
(t)\frac{\partial}{\partial t}\phi_{X}(t).
\]
Notice that
\[
\phi_{M(X)}(t)=\phi_{\Lambda(X)}(t)=\phi_{X}(t),
\]
and that
\begin{equation}
\Lambda(X)\subset X\subset M(X). \label{tango}%
\end{equation}

\subsubsection{Examples}

\textbf{Classical Lorentz spaces}: The spaces $L^{p,q}\left(  \Omega\right)  $
are defined by the function quasi-norm%
\[
\left\|  f\right\|  _{p,q}=\left(  \int_{0}^{1}\left(  s^{1/p}f_{\mu}^{\ast
}(s)\right)  ^{q}\frac{ds}{s}\right)  ^{1/q},
\]
when $0<p,q<\infty,$ and
\[
\left\|  f\right\|
_{p,\infty}=\sup_{0<t<1}s^{1/p}f_{\mu}^{\ast}(s),
\]
when $q=\infty.$ Note that $\left\|  f\right\|  _{p,p}=\left\|
f\right\| _{p}.$ (We use the standard convention $\left\|  f\right\|
_{\infty,\infty
}=\left\|  f\right\|  _{\infty}).$\\

\noindent
\textbf{Lorentz-Zygmund}-\textbf{spaces. }Let $1\leq p,q\leq\infty$ and
$\alpha\in\mathbb{R}.$ The spaces $L^{p,q}(\log L)^{\alpha}$ are defined by
the function quasi-norm%
\[
\left\|  f\right\|  _{p,q,\alpha}=\left(  \int_{0}^{1}\left(  s^{1/p}%
(1+\ln\frac{1}{t})^{\alpha}f_{\mu}^{\ast}(s)\right)  ^{q}\frac{ds}{s}\right)
^{1/q}.
\]
\textbf{Weighted q.r.i-spaces}: Given $X$ a r.i. space on $\Omega$
and $w$ a weight (i.e a positive measurable function), we define
\[
X(w)=\left\{  f:\left\|  f\right\|  _{X(w)}=\left\|  f_{\mu}^{\ast}w\right\|
_{\bar{X}}<\infty\right\}  .
\]
It is easy to see that $X(w)$ is a q.r.i-space. For example if $X=L^{q}%
(\Omega)$ and $w(s)=s^{q/p-1}$ then
\[
L^{q}(w)=L^{p,q}\left(  \Omega\right)  .
\]

\section{Symmetrization and Isoperimetry\label{simetr}}

We will assume in what follows that $\left( \Omega,d,\mu\right)$ is
a connected measure metric spaces equipped with a  with a separable,
non-atomic, probability Borel measure $\mu$ which admits a convex
isoperimetric estimator.

In order to balance generality with power and simplicity, we will assume
throughout the paper that our spaces satisfy the following:

\begin{condition}
\label{cond2.1} We assume that $\Omega$ is such that for every $f\in
Lip(\Omega)$ and every $c\in\mathbb{R}$ we have that $\left|  \nabla
f(x)\right|  =0,$ a.e. on the set $\{x:f(x)=c\}.$
\end{condition}

\begin{theorem}
\label{tm1}Let $I:[0,1]\rightarrow\lbrack0,\infty)$ be a convex isoperimetric
estimator$.$ The following statements are equivalent:

\begin{enumerate}
\item  Isoperimetric inequality: for all Borel sets $A\subset\Omega,$%

\begin{equation}
\mu^{+}(A)\geq I(\mu(A)). \label{eq2.2}%
\end{equation}

\item  Ledoux's inequality (cf \cite{Le}): for all $f\in Lip(\Omega),$%

\begin{equation}
\int_{-\infty}^{\infty}I(\mu_{f}(s))\leq\int_{\Omega}\left|  \nabla
f(x)\right|  d\mu. \label{eq2.3}%
\end{equation}

\item  For all function $f\in Lip(\Omega),$ $f_{\mu}^{\bigstar}$ is locally
absolutely continuous, and
\begin{equation}
\int_{0}^{t}((-f_{\mu}^{\bigstar})^{\prime}I(s))^{\ast}ds\leq\int_{0}%
^{t}\left|  \nabla f\right|  _{\mu}^{\ast}(s)ds. \label{reafun}%
\end{equation}
(The second rearrangement on the left hand side is with respect to the
Lebesgue measure).

\item  Bobkov's inequality (cf \cite{bob}): For all $f\in Lip(\Omega)$ bounded
with $m(f)=0$, and for all $s>0$%
\begin{equation}
\int_{\Omega}\left|  f(x)\right|  d\mu\leq\beta_{1}(s)\int_{\Omega}\left|
\nabla f(x)\right|  d\mu+s\,Osc_{\mu}(f), \label{bob}%
\end{equation}
where $Osc_{\mu}(f)=ess\sup f-ess\inf f,$ and $\beta_{1}(s)=\displaystyle\sup_{s<t\leq
1/2}\frac{t-s}{I(t)}.$
\end{enumerate}
\end{theorem}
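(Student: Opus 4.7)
The plan is to run the chain of implications $(1)\Rightarrow(2)\Rightarrow(3)\Rightarrow(4)\Rightarrow(1)$. For $(1)\Rightarrow(2)$, I would invoke the metric-space co-area inequality
\[
\int_{\Omega}|\nabla f|\,d\mu\geq\int_{-\infty}^{\infty}\mu^{+}(\{f>s\})\,ds,
\]
which uses Condition \ref{cond2.1} to kill plateaux of $f$, and then insert the pointwise lower bound $\mu^{+}(\{f>s\})\geq I(\mu_{f}(s))$ from (\ref{eq2.2}).

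For $(2)\Rightarrow(3)$ I would rely on Maz'ya's truncation trick. For $0<t_{1}<t_{2}<1$, set $a_{i}=f_{\mu}^{\bigstar}(t_{i})$ and apply (\ref{eq2.3}) to the truncation $f_{a_{1}}^{a_{2}}:=\min(\max(f,a_{1}),a_{2})-a_{1}$. The change of variables $s=f_{\mu}^{\bigstar}(\tau)$ transforms the left-hand side of (\ref{eq2.3}) into $\int_{t_{1}}^{t_{2}}I(\tau)(-f_{\mu}^{\bigstar})'(\tau)\,d\tau$, while the right-hand side is dominated by $\int_{0}^{t_{2}-t_{1}}|\nabla f|_{\mu}^{\ast}(s)\,ds$ via the Hardy-Littlewood inequality. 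Local absolute continuity of $f_{\mu}^{\bigstar}$ drops out as a byproduct, since $I$ is bounded below by a positive constant on every compact subinterval of $(0,1)$. To upgrade this interval-based estimate to the rearrangement form (\ref{reafun}), I would approximate an arbitrary measurable $E\subset(0,1)$ by finite disjoint unions of intervals and conclude by the Hardy-Littlewood-P\'olya principle.

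For $(3)\Rightarrow(4)$, decompose $f=f_{+}-f_{-}$; both $f_{\pm}\in Lip(\Omega)$ have median zero (so $f_{\pm,\mu}^{\ast}(1/2)=0$) and, by Condition \ref{cond2.1}, $|\nabla f_{+}|+|\nabla f_{-}|=|\nabla f|$ almost everywhere. Writing $f_{+,\mu}^{\ast}(t)=\int_{t}^{1/2}(-f_{+,\mu}^{\ast})'(\tau)\,d\tau$ and applying Fubini,
\[
\int_{\Omega}f_{+}\,d\mu=\int_{0}^{s}f_{+,\mu}^{\ast}(t)\,dt+\int_{s}^{1/2}(\tau-s)(-f_{+,\mu}^{\ast})'(\tau)\,d\tau.
\]
The first summand is at most $s\,\mathrm{ess\,sup}\,f_{+}$, while the second is at most $\beta_{1}(s)\int_{s}^{1/2}I(\tau)(-f_{+,\mu}^{\ast})'(\tau)\,d\tau\leq \beta_{1}(s)\int_{\Omega}|\nabla f_{+}|\,d\mu$, the last step being (\ref{reafun}) applied to $f_{+}$ with $E=[s,1/2]$. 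Adding the analogous bound for $f_{-}$ and noting $\mathrm{ess\,sup}\,f_{+}+\mathrm{ess\,sup}\,f_{-}=\mathrm{Osc}_{\mu}(f)$ yields (\ref{bob}).

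For $(4)\Rightarrow(1)$, test against $u_{h}(x)=(1-d(x,A)/h)_{+}$, which has median zero as soon as $\mu(A_{h})\leq 1/2$. Since $\int u_{h}\,d\mu\geq\mu(A)$, $\int|\nabla u_{h}|\,d\mu\leq (\mu(A_{h})-\mu(A))/h$ and $\mathrm{Osc}(u_{h})\leq 1$, (\ref{bob}) gives, after $h\to 0$, the bound $\mu(A)\leq\beta_{1}(s)\mu^{+}(A)+s$ valid for every $0<s<\mu(A)$. A short Legendre-type computation, using only convexity of $I$ and $I(0)=0$, shows that the optimum $s^{\ast}=\mu(A)-I(\mu(A))/I'(\mu(A))$ realizes $\sup_{0<s<\mu(A)}(\mu(A)-s)/\beta_{1}(s)=I(\mu(A))$, which gives (\ref{eq2.2}). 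The step I expect to be the main obstacle is $(2)\Rightarrow(3)$: justifying the passage from interval-by-interval estimates to the full Hardy-Littlewood-P\'olya rearrangement inequality (\ref{reafun}) and the accompanying local absolute continuity of $f_{\mu}^{\bigstar}$; the remaining implications are essentially calculus once that machinery is in place.
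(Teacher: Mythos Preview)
Your chain of implications and overall strategy match the paper's proof. Two points in the $(2)\Rightarrow(3)$ step need attention.

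First, a notational slip: with $0<t_1<t_2<1$ and $a_i=f_\mu^\bigstar(t_i)$ you have $a_1\ge a_2$ (since $f_\mu^\bigstar$ is decreasing), so the truncation $\min(\max(f,a_1),a_2)-a_1$ is ill-formed; swap the levels.

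More importantly, the change of variables $s=f_\mu^\bigstar(\tau)$ you invoke to rewrite the left side of Ledoux as $\int_{t_1}^{t_2} I(\tau)(-f_\mu^\bigstar)'(\tau)\,d\tau$ already presupposes the local absolute continuity you then say ``drops out as a byproduct.'' This is circular as written. The paper avoids this by reversing the order: from Ledoux applied to the truncation it first extracts the crude lower bound
\[
\bigl(f_\mu^\bigstar(s)-f_\mu^\bigstar(s+h)\bigr)\min\{I(s),I(s+h)\}\le\int_{\{f_\mu^\bigstar(s+h)<f<f_\mu^\bigstar(s)\}}|\nabla f|\,d\mu,
\]
which (via the positive lower bound on $I$ on compact subintervals, as you note) yields local absolute continuity directly. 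Only then does it let $h\to0$ to obtain the pointwise inequality $(-f_\mu^\bigstar)'(s)I(s)\le\partial_s\int_{\{f>f_\mu^\bigstar(s)\}}|\nabla f|\,d\mu$, and finally integrates over finite unions of intervals and passes to the limit to reach (\ref{reafun}). All your ingredients are correct; the order must be fixed.

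The remaining implications agree with the paper. Your $(3)\Rightarrow(4)$ is a minor algebraic rearrangement of the paper's computation, and your $(4)\Rightarrow(1)$ via the explicit optimizer $s^{*}=\mu(A)-I(\mu(A))/I'(\mu(A))$ is actually more detailed than the paper, which merely cites \cite{bob} for the identity $\sup_{0<s<t}(t-s)/\beta_1(s)=I(t)$.
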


\begin{proof}
$1)\rightarrow2)$ By the co-area inequality applied to $f$ (cf. \cite[Lemma
3.1]{BH}), and the isoperimetric inequality (\ref{eq2.2}), it follows that
\begin{align*}
\int_{\Omega}\left|  \nabla f(x)\right|  d\mu &  \geq\int_{-\infty}^{\infty
}\mu^{+}(\{f>s\};\Omega)ds\\
&  \geq\int_{0}^{\infty}I(\mu_{f}(s))ds\text{ }.
\end{align*}

$2)\rightarrow3)$ Let $-\infty<t_{1}<t_{2}<\infty.$ The smooth truncations of
$f$ are defined by
\[
f_{t_{1}}^{t_{2}}(x)=\left\{
\begin{array}
[c]{ll}%
t_{2}-t_{1} & \text{if }f(x)\geq t_{2},\\
f(x)-t_{1} & \text{if }t_{1}<f(x)<t_{2},\\
0 & \text{if }f(x)\leq t_{1}.
\end{array}
\right.
\]

\noindent Obviously, $f_{t_{1}}^{t_{2}}\in Lip(\Omega),$ thus by
(\ref{eq2.2}), we get
\[
\int_{-\infty}^{\infty}I(\mu_{f_{t_{1}}^{t_{2}}}(s))ds\leq\int_{\Omega}\left|
\nabla f_{t_{1}}^{t_{2}}(x)\right|  d\mu.
\]
\newline By condition \ref{cond2.1}
\[
\left|  \nabla f_{t_{1}}^{t_{2}}\right|  =\left|  \nabla f\right|
_{\chi_{\{t_{1}<f<t_{2}\}}},
\]
and moreover,
\[
\int_{-\infty}^{\infty}I(\mu_{f_{t_{1}}^{t_{2}}}(s))ds=\int_{t_{1}}^{t_{2}%
}I(\mu_{f_{t_{1}}^{t_{2}}}(s))ds.
\]
\noindent Observe that, $t_{1}<z<t_{2},$
\[
\mu\{f\geq t_{2}\}\leq\mu_{f_{t_{1}}^{t_{2}}}(z)\leq\mu\{f>t_{1}\}.
\]
Consequently, by the properties of $I,$ we have
\begin{equation}
\int_{t_{1}}^{t_{2}}I(\mu_{f_{t_{1}}^{t_{2}}}(z))dz\geq(t_{2}-t_{1}%
)\min\{I(\mu\{f\geq t_{2}\}),I(\mu\{f>t_{1}\})\}. \label{prim}%
\end{equation}
\noindent Let us see that $f_{\mu}^{\bigstar}$ is locally absolutely
continuous. Indeed, for $s>0$ and $h>0,$ pick $t_{1}=f_{\mu}^{\bigstar}(s+h),$
$t_{2}=f_{\mu}^{\bigstar}(s),$ then
\begin{equation}
s\leq\mu\{f(x)\geq f_{\mu}^{\bigstar}(s)\}\leq\mu_{f_{t_{1}}^{t_{2}}}%
(s)\leq\mu\{f(x)>f_{\mu}^{\bigstar}(s+h)\}\leq s+h. \label{prim1}%
\end{equation}
Combining (\ref{prim}) and (\ref{prim1}) we have,
\begin{equation}
(f_{\mu}^{\bigstar}(s)-f_{\mu}^{\bigstar}(s+h))\min\{I(s+h),I(s)\}\leq
\int_{\left\{  f_{\mu}^{\bigstar}(s)<f<f_{\mu}^{\bigstar}(s+h)\right\}
}\left|  \nabla f(x)\right|  d\mu\label{prim2}%
\end{equation}
\noindent which implies that $f_{\mu}^{\bigstar}$ is locally absolutely
continuous in $\left[  a,b\right]  $ $(0<a<b<1).$ Indeed, for any finite
family of non-overlapping intervals $\left\{  (a_{k},b_{k})\right\}
_{k=1}^{r},$ with $(a_{k},b_{k})\subset\left[  a,b\right]  ,$ and $\sum
_{k=1}^{r}(b_{k}-a_{k})\leq\delta,$ we have
\[
\mu\{\bigcup_{k=1}^{r}\{f_{\mu}^{\bigstar}(b_{k})<f<f_{\mu}^{\bigstar}%
(a_{k})\}\}=\sum_{k=1}^{r}\mu\{f_{\mu}^{\bigstar}(b_{k})<f<f_{\mu}^{\bigstar
}(a_{k})\}\leq\sum_{k=1}^{r}(b_{k}-a_{k})\leq\delta.
\]
\noindent therefore, combining this fact with (\ref{prim2}), we have
\begin{align*}
\sum_{k=1}^{r}(f_{\mu}^{\bigstar}(a_{k})-f_{\mu}^{\bigstar}(b_{k}%
))\min\{I(a),I(b)\}  &  \leq\sum_{k=1}^{r}(f_{\mu}^{\bigstar}(a_{k})-f_{\mu
}^{\bigstar}(b_{k}))\min\{I(a_{k}),I(b_{k})\}\\
&  \leq\sum_{k=1}^{r}\int_{\{f_{\mu}^{\bigstar}(b_{k})<f<f_{\mu}^{\bigstar
}(a_{k})\}}\left|  \nabla f(x)\right|  d\mu\\
&  =\int_{\cup_{k=1}^{r}\{f_{\mu}^{\bigstar}(b_{k})<f<f_{\mu}^{\bigstar}%
(a_{k})\}}\left|  \nabla f(x)\right|  d\mu\\
&  \leq\int_{0}^{\sum_{k=1}^{r}(b_{k}-a_{k})}\left|  \nabla f\right|  _{\mu
}^{\ast}(t)dt\\
&  \leq\int_{0}^{\delta}\left|  \nabla f\right|  _{\mu}^{\ast}(t)dt,
\end{align*}
and the local absolute continuity follows.\newline \noindent Now, using
(\ref{prim2}) we get,
\begin{align*}
\dfrac{(f_{\mu}^{\bigstar}(s)-f_{\mu}^{\bigstar}(s+h))}{h}\min(I(s+h),I(s))
&  \leq\int_{\{f_{\mu}^{\bigstar}(s+h)<f<f_{\mu}^{\bigstar}(s)\}}\left|
\nabla f(x)\right|  d\mu\\
&  \leq\dfrac{1}{h}\int_{\{f_{\mu}^{\bigstar}(s+h)<f\leq f_{\mu}^{\bigstar
}(s)\}}\left|  \nabla f(x)\right|  d\mu.
\end{align*}
\noindent Letting $h\rightarrow0,$%
\[
(-f_{\mu}^{\bigstar})^{\prime}(s)I(s)\leq\frac{\partial}{\partial s}%
\int_{\{f>f_{\mu}^{\bigstar}(s)\}}\left|  \nabla f(x)\right|  d\mu.
\]

Let us consider a finite family of intervals $\left(  a_{i},b_{i}\right)  ,$
$i=1,\ldots,m$, with $0<a_{1}<b_{1}\leq a_{2}<b_{2}\leq\cdots\leq a_{m}%
<b_{m}<1,$ then
\begin{align*}
\int_{\cup_{1\leq i\leq m}(a_{i},b_{i})}\left(  -f_{\mu}^{\bigstar}\right)
^{{\prime}}(s)I(s)ds  &  \leq\int_{\cup_{1\leq i\leq m}(a_{i},b_{i})}\left(
\frac{\partial}{\partial s}\int_{\left\{  \left|  f\right|  >f_{\mu}%
^{\bigstar}(s)\right\}  }\left|  \nabla f(x)\right|  d\mu(x)\right)  ds\\
&  =\sum_{i=1}^{m}\int_{\left\{  f_{\mu}^{\bigstar}(b_{i})<\left|  f\right|
\leq f_{\mu}^{\bigstar}(a_{i})\right\}  }\left|  \nabla f(x)\right|  d\mu(x)\\
&  =\sum_{i=1}^{m}\int_{\left\{  f_{\mu}^{\bigstar}(b_{i})<\left|  f\right|
<f_{\mu}^{\bigstar}(a_{i})\right\}  }\left|  \nabla f(x)\right|  d\mu(x)\text{
(by condition \ref{cond2.1}))}\\
&  =\int_{\cup_{1\leq i\leq m}\left\{  f_{\mu}^{\bigstar}(b_{i})<\left|
f\right|  <f_{\mu}^{\bigstar}(a_{i})\right\}  }\left|  \nabla f(x)\right|
d\mu(x)\\
&  \leq\int_{0}^{\sum_{i=1}^{m}\left(  b_{i}-a_{i}\right)  }\left|  \nabla
f\right|  _{\mu}^{\ast}(s)ds.
\end{align*}
Now by a routine limiting process we can show that for any
measurable set $E\subset$ $(0,1),$ with Lebesgue measure equal to
$t,$ we have
\[
\int_{E}(-f_{\mu}^{\bigstar})^{\prime}(s)I(s)ds\leq\int_{0}^{|E|}\left|
\nabla f\right|  _{\mu}^{\ast}(s)ds.
\]
Therefore
\begin{equation}
\int_{0}^{t}((-f_{\mu}^{\bigstar})^{\prime}(\cdot)I(\cdot))^{\ast}%
(s)ds\leq\int_{0}^{t}\left(  \left|  \nabla f\right|  _{\mu}^{\ast}%
(\cdot)\right)  ^{\ast}(s)ds, \label{aa}%
\end{equation}
where the second rearrangement is with respect to the Lebesgue
measure. Now, since $\left|  \nabla f\right|  _{\mu}^{\ast}(s)$ is
decreasing, we have
\[
\left(  \left|  \nabla f\right|  _{\mu}^{\ast}(\cdot)\right)  ^{\ast
}(s)=\left|  \nabla f\right|  _{\mu}^{\ast}(s),
\]
and thus (\ref{aa}) yields
\[
\int_{0}^{t}((-f_{\mu}^{\bigstar})^{\prime}(\cdot)I(\cdot))^{\ast}%
(s)ds\leq\int_{0}^{t}\left|  \nabla f\right|  _{\mu}^{\ast}(s)ds.
\]

$3)\rightarrow4)$ Assume first that $f\in Lip(\Omega)$ is positive,
bounded with $m(f)=0$. By 3) we have that
$f_{\mu}^{\bigstar}=f_{\mu}^{\ast}$ (since $f\geq0)$ is locally
absolutely continuous and $f_{\mu}^{\ast}(1/2)=0$ (since $m(f)=0$).
Let $0<s<z\leq1/2,$ then
\begin{align*}
\int_{\Omega}\left|  f(x)\right|  d\mu &  =\int_{0}^{1/2}f_{\mu}^{\ast
}(z)dz=\int_{0}^{1/2}\int_{z}^{1/2}(-f_{\mu}^{\ast})^{\prime}(x)dxdz=\\
&  =\int_{0}^{1/2}z(-f_{\mu}^{\ast})^{\prime}(z)dz-s\int_{0}^{1/2}(-f_{\mu
}^{\ast})^{\prime}(z)dz+s\int_{0}^{1/2}(-f_{\mu}^{\ast})^{\prime}(z)dz\\
&  =\int_{0}^{1/2}\frac{z-s}{I(z)}(-f_{\mu}^{\ast})^{\prime}(z)I(z)dz+s\int
_{0}^{1/2}(-f_{\mu}^{\ast})^{\prime}(z)dz\\
&  \leq\sup_{s<z\leq1/2}\frac{z-s}{I(z)}\int_{0}^{1/2}(-f_{\mu}^{\ast
})^{\prime}(z)I(z)dz+s\int_{0}^{1/2}(-f_{\mu}^{\ast})^{\prime}(z)dz\\
&  \leq\beta_{1}(s)\int_{0}^{1/2}(-f_{\mu}^{\ast})^{\prime}(z)I(z)dz+s\int
_{0}^{1/2}(-f_{\mu}^{\ast})^{\prime}(z)dz.
\end{align*}

Since
\[
s\int_{0}^{1/2}(-f_{\mu}^{\ast})^{\prime}(z)=s(f_{\mu}^{\ast}(0^{+})-f_{\mu
}^{\ast}(1/2))\leq s\,Osc_{\mu}(f),
\]
we get
\begin{align*}
\int_{\Omega}\left|  f(x)\right|  d\mu &  \leq\beta_{1}(s)\int_{0}^{t}%
(-f_{\mu}^{\ast})^{\prime}(z)I(z)dz+s\,Osc_{\mu}(f)\\
&  \leq\beta_{1}(s)\int_{0}^{1/2}\left(  (-f_{\mu}^{\ast})^{\prime}%
(\cdot)I(\cdot)\right)  ^{\ast}(t)dt+s\,Osc_{\mu}(f)\\
&  \leq\beta_{1}(s)\int_{0}^{1/2}\left|  \nabla f\right|  _{\mu}^{\ast
}(t)dt+s\,Osc_{\mu}(f)\text{ \ \ (by (\ref{reafun}))}\\
&  =\beta_{1}(s)\int_{\Omega}\left|  \nabla f(x)\right|
d\mu+s\,Osc_{\mu
}(f)\text{ }%
\end{align*}
In the general case, we follow \cite[Lemma 8.3]{bob}. Apply the
previous argument to $f^{+}=\max(f,0)$ and $f^{-}=\max(-f,0),$ which
are positive, Lipschitz and have median zero, and we obtain.
\[
\int_{\left\{  f>0\right\}  }\left|  f(x)\right|  d\mu\leq\beta_{1}%
(s)\int_{\left\{  f>0\right\}  }\left|  \nabla f(x)\right|
d\mu+s\,Osc_{\mu
}(f^{+})\text{,}%
\]%
\[
\int_{\left\{  f<0\right\}  }\left|  f(x)\right|  d\mu\leq\beta_{1}%
(s)\int_{\left\{  f><0\right\}  }\left|  \nabla f(x)\right|
d\mu+s\,Osc_{\mu
}(f^{-}).\text{ }%
\]
Adding the two inequalities and since
$Osc_{\mu}(f^{-})+Osc_{\mu}(f^{+})\leq Osc_{\mu}(f),$ we get
(\ref{bob}).

$4)\rightarrow1)$ This part was proved in \cite[Lemma 8.3]{bob}, we
include its proof for the sake of completeness. Given a Borel set
$A\subset\Omega$ we may approximate the indicator function
$\chi_{A}$ by functions with finite Lipschitz seminorm (see
\cite{BH}) to derive $\mu(A)\leq\beta_{1}(s)\mu
^{+}(A)+s,$ therefore if $\mu(A)=t$%
\[
t-s\leq\beta_{1}(s)\mu^{+}(A)
\]
thus the optimal choice should be%
\[
I(t)=\sup_{0<s<t}\frac{t-s}{\beta_{1}(s)}.
\]
\end{proof}

\section{Sobolev-Poincar\'{e} and Nash type inequalities\label{discussion}}

The isoperimetric inequality implies weaker Sobolev-Poincar\'{e} and
Nash type inequalities, in what follows we will analyze both.

\subsection{Sobolev-Poincar\'{e} inequalities}

The isoperimetric Hardy operator $Q_{I}$ is the operator defined on
Lebesgue
measurable functions on $(0,1)$ by%
\[
Q_{I}f(t)=\int_{t}^{1/2}f(s)\frac{ds}{I(s)}\text{, \ \ \ \ }0<t<1/2,
\]
where $I$ is a convex isoperimetric estimator. In this section we consider the
possibility of characterizing Sobolev embeddings in terms of the boundedness
of $Q_{I}.$

\begin{lemma}
\label{acot}Let $Y,$ $Z$ be two q.r.i spaces on $(0,1).$ Assume that there is
a constant $C_{0}>0$ such that
\begin{equation}
\left\|  Q_{I}f\right\|  _{Y}\leq C_{0}\left\|  f\right\|  _{Z}.
\label{acotado}%
\end{equation}
Then, there exists a constant $C_{1}>0$ such that
\[
\left\|  \bar{Q}_{I}f\right\|  _{Y}\leq C_{1}\left\|  f\right\|  _{Z},
\]
where $\bar{Q}_{I}$ is the operator defined on Lebesgue measurable functions
on $(0,1)$ by
\[
\bar{Q}_{I}f(t)=\int_{t}^{1/2}f(s)\frac{ds}{I(s)}\text{, \ \ \ \ }0<t<1.
\]
\end{lemma}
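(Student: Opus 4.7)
The plan is to split $\bar{Q}_I$ into two pieces according to whether $t$ lies in $(0,1/2)$ or in $(1/2,1)$, and to reduce the second piece to the first one by exploiting the symmetry $I(s)=I(1-s)$ built into the definition of a convex isoperimetric estimator. Since $Y$ and $Z$ are q.r.i.\ spaces, I may assume throughout that $f\ge 0$ and work with $|\bar{Q}_I f|$; the norm only sees the rearrangement, so the fact that $\bar{Q}_I f$ is nonpositive on $(1/2,1)$ when $f\ge 0$ is irrelevant.

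On $(0,1/2)$ the definition of $\bar{Q}_I$ coincides with that of $Q_I$, so, extending $Q_I f$ by zero to $(0,1)$, the hypothesis (\ref{acotado}) gives $\|\bar{Q}_I f\,\chi_{(0,1/2)}\|_Y = \|Q_I f\|_Y \le C_0\|f\|_Z$. For the second piece I use the substitution $u=1-s$ and $I(u)=I(1-u)$: for $t\in(1/2,1)$,
$$\bigl|\bar{Q}_I f(t)\bigr| = \int_{1/2}^{t} f(s)\,\frac{ds}{I(s)} = \int_{1-t}^{1/2} \tilde{f}(u)\,\frac{du}{I(u)} = Q_I \tilde{f}(1-t),$$
where $\tilde{f}(u):=f(1-u)$. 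Hence $|\bar{Q}_I f|\,\chi_{(1/2,1)}$ on $(0,1)$ is the reflection about $1/2$ of $Q_I\tilde{f}$ (extended by zero from $(0,1/2)$ to $(0,1)$), and since Lebesgue measure is invariant under $t\mapsto 1-t$ the two functions are equimeasurable. By the rearrangement invariance of $Y$ combined with the hypothesis applied to $\tilde{f}$, together with $\|\tilde{f}\|_Z = \|f\|_Z$ from the rearrangement invariance of $Z$, the second piece is also bounded by $C_0\|f\|_Z$.

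Adding the two estimates via the quasi-triangle inequality of $Y$ with quasi-norm constant $C_Y$ yields $\|\bar{Q}_I f\|_Y \le 2 C_Y C_0\|f\|_Z$, so one may take $C_1 = 2 C_Y C_0$. I do not foresee a serious obstacle: the computation is essentially bookkeeping, the substantive ingredient being that the symmetry of $I$ about $1/2$ allows the $(1/2,1)$ piece to be converted, by reflection, into a $(0,1/2)$ piece to which the hypothesis applies. The only minor subtlety is to be careful with the convention for $\bar{Q}_I$ on $(1/2,1)$, which is handled by passing to absolute values before applying $\|\cdot\|_Y$.
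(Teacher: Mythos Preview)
Your proof is correct and follows essentially the same route as the paper: split $\bar{Q}_I f$ into its $(0,1/2)$ and $(1/2,1)$ parts, identify the first with $Q_I f$, and for the second use the substitution $s\mapsto 1-s$ together with the symmetry $I(s)=I(1-s)$ to rewrite it as $Q_I\tilde f(1-t)$ with $\tilde f(u)=f(1-u)$, then invoke the rearrangement invariance of $Y$ and $Z$ and the hypothesis. The paper carries out exactly this computation (without first reducing to $f\ge 0$, but that reduction is harmless since $|\bar{Q}_I f|\le |\bar{Q}_I|f||$ pointwise and $\|f\|_Z=\||f|\|_Z$).
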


\begin{proof}
Since
\begin{align*}
\bar{Q}_{I}f(t)  &  =\chi_{(0,1/2)}(t)\bar{Q}_{I}f(t)+\chi_{(1/2,1)}%
(t)\int_{t}^{1/2}f(s)\frac{ds}{I(s)}\\
&  =\chi_{(0,1/2)}(t)Q_{I}f(t)+\chi_{(1/2,1)}(t)\int_{t}^{1/2}f(s)\frac
{ds}{I(s)},
\end{align*}
it is enough to prove the boundedness of $\chi_{(1/2,1)}(t)\int_{t}%
^{1/2}f(s)\frac{ds}{I(s)}.$

For $t\in(1/2,1),$ we have that%
\begin{align*}
\int_{t}^{1/2}f(s)\frac{ds}{I(s)}  &  =-\int_{1/2}^{t}f(s)\frac{ds}{I(s)}%
=\int_{1/2}^{1-t}f(1-s)\frac{ds}{I(1-s)}\\
&  =-\int_{1-t}^{1/2}f(1-s)\frac{ds}{I(s)}\text{ \ (since }I(s)=I(1-s)).
\end{align*}
Thus%
\begin{align*}
\left\|  \chi_{(1/2,1)}(t)\int_{t}^{1/2}f(s)\frac{ds}{I(s)}\right\|  _{Y}  &
=\left\|  \chi_{(1/2,1)}(t)\int_{1-t}^{1/2}f(1-s)\frac{ds}{I(s)}\right\|
_{Y}\\
&  =\left\|  \chi_{(1/2,1)}(1-t)\int_{t}^{1/2}f(1-s)\frac{ds}{I(s)}\right\|
_{Y}\text{ \ \ (since }\left\|  \cdot\right\|  _{Y}\text{ is r.i)}\\
&  =\left\|  \chi_{(0,1/2)}(t)\int_{t}^{1/2}f(1-s)\frac{ds}{I(s)}\right\|
_{Y}\\
&  \leq C\left\|  \chi_{(0,1/2)}(t)f(1-t)\right\|  _{Z}\\
&  \leq C\left\|  f(t)(\chi_{(0,1/2)}(1-t))\right\|  _{Z}\text{ \ (since
}\left\|  \cdot\right\|  _{\bar{X}}\text{ is r.i)}\\
&  \leq C\left\|  \chi_{(1/2,1)}(t)f(t)\right\|  _{Z}\\
&  \leq C\left\|  f\right\|  _{Z}.
\end{align*}
\end{proof}

\begin{theorem}
\label{t1}Let $Y$ be a q.r.i. space on $(0,1),$ and let $X$ be a r.i. space on
$\Omega.$ Assume that there is a constant $C>0$ such that
\begin{equation}
\left\|  Q_{I}f\right\|  _{Y}\leq C\left\|  f\right\|  _{\bar{X}}
\label{hiphip}%
\end{equation}
then, for all $g\in Lip(\Omega)$ we have that
\[
\inf_{c\in\mathbb{R}}\left\|  \left(  g-c\right)  _{\mu}^{\ast}\right\|
_{Y}\preceq\left\|  \left|  \nabla g\right|  \right\|  _{X}.
\]
\end{theorem}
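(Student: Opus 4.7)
The plan is to use Theorem~\ref{tm1}(3) together with the boundedness of the isoperimetric Hardy operator to reduce the Sobolev--Poincar\'e estimate to a single integral identity for the signed rearrangement.

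I would choose $c$ to be a median of $g$, specifically $c=g_{\mu}^{\bigstar}(1/2)$, and set $f=g-c$; then $|\nabla f|=|\nabla g|$ a.e.\ by Condition~\ref{cond2.1}, and by Theorem~\ref{tm1}(3) the signed rearrangement $f_{\mu}^{\bigstar}$ is locally absolutely continuous on $(0,1)$, hence continuous, so $f_{\mu}^{\bigstar}(1/2)=0$. The key observation is that for every $t\in(0,1)$,
\[
\bar{Q}_{I}\bigl((-f_{\mu}^{\bigstar})'\,I\bigr)(t)=\int_{t}^{1/2}(-f_{\mu}^{\bigstar})'(s)\,ds=f_{\mu}^{\bigstar}(t)-f_{\mu}^{\bigstar}(1/2)=f_{\mu}^{\bigstar}(t),
\]
so $f_{\mu}^{\bigstar}$ is represented as $\bar{Q}_{I}$ applied to the non-negative function $h:=(-f_{\mu}^{\bigstar})'\,I$.

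Taking the $Y$-norm and invoking Lemma~\ref{acot}, which promotes the hypothesis $\|Q_{I}\cdot\|_{Y}\preceq\|\cdot\|_{\bar{X}}$ to the corresponding bound for $\bar{Q}_{I}$ via the symmetry of $I$, yields
\[
\|f_{\mu}^{\bigstar}\|_{Y}=\bigl\|\bar{Q}_{I}\bigl((-f_{\mu}^{\bigstar})'\,I\bigr)\bigr\|_{Y}\preceq\bigl\|(-f_{\mu}^{\bigstar})'\,I\bigr\|_{\bar{X}}=\bigl\|\bigl((-f_{\mu}^{\bigstar})'\,I\bigr)^{\ast}\bigr\|_{\bar{X}},
\]
the last equality being the rearrangement invariance of $\bar{X}$. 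From Theorem~\ref{tm1}(3) we have $\int_{0}^{t}((-f_{\mu}^{\bigstar})'\,I)^{\ast}(s)\,ds\leq\int_{0}^{t}|\nabla g|_{\mu}^{\ast}(s)\,ds$ for every $t$, and the Hardy--Littlewood--P\'olya principle in the Banach r.i.\ space $\bar{X}$ converts this to $\|((-f_{\mu}^{\bigstar})'\,I)^{\ast}\|_{\bar{X}}\leq\||\nabla g|_{\mu}^{\ast}\|_{\bar{X}}=\||\nabla g|\|_{X}$.

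Finally, $f$ and $f_{\mu}^{\bigstar}$ are equimeasurable, so $|f|$ and $|f_{\mu}^{\bigstar}|$ share the same distribution, and the rearrangement invariance of $Y$ gives $\|(g-c)_{\mu}^{\ast}\|_{Y}=\|f_{\mu}^{\bigstar}\|_{Y}$; taking the infimum over $c\in\mathbb{R}$ produces the claimed inequality. The main technical point is ensuring $f_{\mu}^{\bigstar}(1/2)=0$ by selecting the correct median, which kills the boundary term in the integral representation and lets a single application of $\bar{Q}_{I}$ transport the gradient bound provided by the isoperimetric inequality into the desired functional inequality; everything else is a clean combination of Lemma~\ref{acot}, Theorem~\ref{tm1}(3), and the Hardy--Littlewood--P\'olya principle.
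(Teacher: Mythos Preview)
Your proof is correct and follows essentially the same route as the paper's: represent $g_{\mu}^{\bigstar}(t)-g_{\mu}^{\bigstar}(1/2)$ as $\bar{Q}_{I}$ applied to $(-g_{\mu}^{\bigstar})'I$, invoke Lemma~\ref{acot} together with the hypothesis~(\ref{hiphip}), and finish with Theorem~\ref{tm1}(3) and the Hardy--Littlewood--P\'olya principle in $\bar{X}$. The only cosmetic difference is that you subtract the median first and work with $f=g-g_{\mu}^{\bigstar}(1/2)$ (noting $f_{\mu}^{\bigstar}=g_{\mu}^{\bigstar}-g_{\mu}^{\bigstar}(1/2)$), whereas the paper keeps $g$ and carries the constant $g_{\mu}^{\bigstar}(1/2)$ through the computation; you also spell out the Hardy--Littlewood--P\'olya step, which the paper leaves implicit in its citation of~(\ref{reafun}).
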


\begin{proof}
Given $g\in Lip(\Omega)$, by part $2$ of Theorem \ref{tm1}, $g_{\mu}%
^{\bigstar}$ is locally absolutely continuous on $(0,1).$ Thus, for
$t\in(0,1),$ we have that
\begin{align*}
\left|  g_{\mu}^{\bigstar}(t)-g_{\mu}^{\bigstar}(1/2)\right|   &  =\left|
\int_{t}^{1/2}\left(  -g_{\mu}^{\bigstar}\right)  ^{\prime}(s)ds\right|
=\left|  \int_{t}^{1/2}\left(  -g_{\mu}^{\bigstar}\right)  ^{\prime
}(s)I(s)\frac{ds}{I(s)}\right| \\
&  =\left|  \bar{Q}_{I}\left(  \left(  -g_{\mu}^{\bigstar}\right)  ^{\prime
}(\cdot)I(\cdot)\right)  (t)\right|  .\text{\ }%
\end{align*}
Then
\begin{align*}
\left\|  \left|  g_{\mu}^{\bigstar}(t)-g_{\mu}^{\bigstar}(1/2)\right|
\right\|  _{Y}  &  =\left\|  \bar{Q}_{I}\left(  \left(  -g_{\mu}^{\bigstar
}\right)  ^{\prime}(\cdot)I(\cdot)\right)  (t)\right\|  _{Y}\\
&  \preceq\left\|  \left(  -g_{\mu}^{\bigstar}\right)  ^{\prime}(\cdot
)I(\cdot)\right\|  _{\bar{X}}\text{ \ \ (by (\ref{hiphip}) and Lemma
\ref{acot})}\\
&  \preceq\left\|  \left|  \nabla g\right|  \right\|  _{X}\text{ \ (by
(\ref{reafun})).}%
\end{align*}
Therefore%
\begin{align*}
\inf_{c\in\mathbb{R}}\left\|  \left(  g-c\right)  _{\mu}^{\ast}(t)\right\|
_{Y}  &  =\inf_{c\in\mathbb{R}}\left\|  \left(  g-c\right)  _{\mu}^{\bigstar
}(t)\right\|  _{Y}\\
&  \leq\left\|  \left|  g_{\mu}^{\bigstar}(t)-g_{\mu}^{\bigstar}(1/2)\right|
\right\|  _{Y}\\
&  \preceq\left\|  \left|  \nabla g\right|  \right\|  _{X}.
\end{align*}
\end{proof}

\begin{theorem}
\label{t2}Let $X$ be a r.i. space on $\Omega.$ Assume that
$\underline{\alpha }_{X}>0$ or that there is $c>0$ such that the
convex isoperimetric estimator $I,$ satisfies that
\begin{equation}
\int_{t}^{1/2}\frac{ds}{I(s)}\leq c\frac{t}{I(t)},\text{ \ \ }0<t<1/2.
\label{peso}%
\end{equation}
Then, for all $g\in Lip(\Omega)$, we have that
\begin{equation}
\inf_{c\in\mathbb{R}}\left\|  \left(  g-c\right)  _{\mu}^{\ast}(t)\frac
{I(t)}{t}\right\|  _{\bar{X}}\preceq\left\|  \left|  \nabla g\right|
\right\|  _{X}. \label{desii}%
\end{equation}

Moreover, if $Y$ is a q.r.i. space on $(0,1)$ such that%
\begin{equation}
\left\|  Q_{I}f\right\|  _{Y}\preceq\left\|  f\right\|  _{\bar{X}}, \label{hip}%
\end{equation}
then for all $\mu-$measurable function $g$ on $\Omega,$ we have that%
\[
\left\|  g_{\mu}^{\ast}\right\|  _{Y}\preceq\left\|  g_{\mu}^{\ast}%
(t)\frac{I(t)}{t}\right\|  _{\bar{X}}.
\]
In particular, for all $g\in Lip(\Omega)$, we get%
\[
\inf_{c\in\mathbb{R}}\left\|  \left(  g-c\right)  _{\mu}^{\ast}\right\|
_{Y}\preceq\inf_{c\in\mathbb{R}}\left\|  \left(  g-c\right)  _{\mu}^{\ast
}(t)\frac{I(t)}{t}\right\|  _{\bar{X}}\preceq\left\|  \left|  \nabla g\right|
\right\|  _{X}.
\]
\end{theorem}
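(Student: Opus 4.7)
The plan is to prove the three displayed inequalities of Theorem \ref{t2} in order. For the first, I take $c = g_\mu^\bigstar(1/2)$, so that $h := g - c$ has median zero and $h_\mu^\bigstar(1/2) = 0$. Writing $h = h^+ - h^-$ and using the standard subadditivity of decreasing rearrangements, together with the identifications $(h^+)^*_\mu(s) = h_\mu^\bigstar(s)$ and $(h^-)^*_\mu(s) = |h_\mu^\bigstar(1 - s)|$ for $s \leq 1/2$, I obtain
\[
(g - c)_\mu^*(t) \leq h_\mu^\bigstar(t/2) + |h_\mu^\bigstar(1 - t/2)|, \qquad t \in (0,1).
\]
By part 3 of Theorem \ref{tm1} applied to $g$, $h_\mu^\bigstar$ is locally absolutely continuous, so (using $h_\mu^\bigstar(1/2) = 0$) each term on the right equals $\bar{Q}_I\bigl((-h_\mu^\bigstar)' I\bigr)$ (the extension provided by Lemma \ref{acot}) evaluated at $t/2$ or $1 - t/2$. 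Multiplying by $I(t)/t$ and taking the $\bar X$-norm reduces the first inequality to showing the weighted operator bound $f \mapsto Q_I f(t) \cdot I(t)/t$ on $\bar X$, combined with $\|(-g_\mu^\bigstar)' I\|_{\bar X} \leq \||\nabla g|\|_X$, which follows from part 3 of Theorem \ref{tm1} via the standard r.i.\ property quoted in Section \ref{ri}.

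The boundedness of $f \mapsto Q_I f \cdot I/t$ on $\bar X$ is the central technical step. Convexity of $I$ with $I(0) = 0$ makes $I(t)/t$ nondecreasing on $(0, 1/2)$. Under hypothesis (\ref{peso}), the pointwise bound $Q_I f^*(t) \leq f^*(t) \int_t^{1/2} ds/I(s) \leq c\, f^*(t)\, t/I(t)$ yields the required estimate for decreasing $f^*$ immediately; general signed $f$ is reduced to this via $|Q_I f| \leq Q_I |f|$ and the lattice property of $\bar X$. Under $\underline{\alpha}_X > 0$, the monotonicity of $I(t)/t$ gives $(I(t)/t)(1/I(s)) \leq 1/s$ for $s \geq t$, so $Q_I f(t) \cdot I(t)/t \leq Q|f|(t)$, and the Hardy operator $Q$ is bounded on $\bar X$ by Boyd's theorem (\ref{alcance}).

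For the second inequality, the key observation is that for any decreasing $f$ on $(0, 1)$ monotonicity gives
\[
f(u) \leq \frac{1}{\ln 2}\int_{u/2}^{u} f(s)\,\frac{ds}{s} \leq \frac{1}{\ln 2}\, Q_I\!\left(\frac{f(s)\,I(s)}{s}\right)\!\left(\frac{u}{2}\right), \qquad u \in (0, 1/2],
\]
with $u \in [1/2, 1)$ handled by $f(u) \leq f(1/2)$ and the same bound at $u = 1/2$. Applied to $f = g_\mu^*$, together with the dilation boundedness of $Y$ (which follows from the standard r.i.\ theory) and the hypothesis (\ref{hip}), this gives $\|g_\mu^*\|_Y \preceq \|g_\mu^*(t) I(t)/t\|_{\bar X}$. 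The combined chain of inequalities then follows immediately by applying the second inequality to the decreasing rearrangement $(g - c)_\mu^*$, taking infima over $c$, and invoking the first. I expect the principal obstacle to lie in the two-case weighted Hardy bound of step one: handling signed $f$ cleanly, combining the convexity-driven weight inequality with Boyd's theorem, and the bookkeeping needed to pass from $Q_I$ to $\bar{Q}_I$ via Lemma \ref{acot}.
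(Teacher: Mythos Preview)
Your overall strategy is sound and your treatment of Case~1 ($\underline{\alpha}_X>0$) and of the second displayed inequality matches the paper's. But there is a genuine gap in your Case~2 argument (when $I$ satisfies (\ref{peso})).

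The pointwise bound $Q_I f^*(t)\le f^*(t)\int_t^{1/2}\frac{ds}{I(s)}\le c\,f^*(t)\,\frac{t}{I(t)}$ is valid only because $f^*$ is decreasing, so that $f^*(s)\le f^*(t)$ for $s\ge t$. Your reduction ``general signed $f$ is reduced to this via $|Q_I f|\le Q_I|f|$'' does not close the loop: $|f|$ is not decreasing in general, and there is no inequality relating $Q_I|f|$ to $Q_I f^*$ in the direction you need. Concretely, the function you feed into $Q_I$ is $F(s)=(-g_\mu^{\bigstar})'(s)\,I(s)$, which is nonnegative but has no reason to be monotone, so your pointwise estimate does not apply to $Q_I F$.

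The paper handles this case by interpolation rather than by a pointwise bound: it shows that $\tilde Q_I f(t)=\frac{I(t)}{t}\,Q_I f(t)$ is bounded $L^1\to L^1$ (Fubini plus the convexity estimate $\int_0^s\frac{I(t)}{t}\,dt\le I(s)$) and $L^\infty\to L^\infty$ (this is precisely where (\ref{peso}) enters, and it is your pointwise bound specialized to constants). Calder\'on--Mityagin interpolation then gives boundedness on every r.i.\ $\bar X$, with no monotonicity hypothesis on the input. A minor secondary point: your $h^+/h^-$ decomposition introduces a dilation $t\mapsto t/2$ that must be undone; the paper sidesteps this by applying Theorem~\ref{t1} directly with target space $Z$ defined by $\|f\|_Z=\|f^*(t)\,I(t)/t\|_{\bar X}$, which is cleaner.
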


\begin{proof}
We associate to the r.i. space $\bar{X}$, the weighted q.r.i space $Z$ on
$(0,1)$ which quasi-norm is defined by
\[
\left\|  f\right\|  _{Z}:=\left\|  f^{\ast}(t)\frac{I(t)}{t}\right\|
_{\bar{X}}.
\]
We claim that there is $C>0$ such that
\[
\left\|  Q_{I}f\right\|  _{Z}\leq C\left\|  f\right\|  _{\bar{X}},
\]
and therefore (\ref{desii}) follows by Theorem \ref{t1}.

Case $1$: $\underline{\alpha}_{X}>0:$%

\begin{align*}
\left\|  Q_{I}f\right\|  _{Z}  &  =\left\|  \frac{I(t)}{t}\left(  \int
_{t}^{1/2}f(s)\frac{ds}{I(s)}\right)  ^{\ast}\right\|  _{\bar{X}}\leq\left\|
\frac{I(t)}{t}\left(  \int_{t}^{1/2}\left|  f(s)\right|  \frac{ds}%
{I(s)}\right)  ^{\ast}\right\|  _{\bar{X}}\\
&  =\left\|  \frac{I(t)}{t}\int_{t}^{1/2}\left|  f(s)\right|  \frac{ds}%
{I(s)}\right\|  _{\bar{X}}\text{ (since }Q_{I}\left|  f\right|  (t)\text{ is
decreasing)}\\
&  =\left\|  \frac{I(t)}{t}\int_{t}^{1/2}\left|  f(s)\right|  \frac{s}%
{I(s)}\frac{ds}{s}\right\|  _{\bar{X}}\text{ }\\
&  \leq\left\|  \int_{t}^{1/2}\left|  f(s)\right|  \frac{ds}{s}\right\|
_{\bar{X}}\text{ \ \ (since }\frac{s}{I(s)}\text{ decreases)}\\
&  \preceq\left\|  f\right\|  _{\bar{X}}\text{ \ (since }\underline{\alpha
}_{X}>0\text{).}%
\end{align*}

Case $2:$ The convex isoperimetric estimator satisfies (\ref{peso}).

Consider $\tilde{Q}_{I}$ defined by
\[
\tilde{Q}_{I}f(t)=\frac{I(t)}{t}Q_{I}f(t).
\]

We claim that $\tilde{Q}_{I}:L^{1}(0,1)\rightarrow L^{1}(0,1)$ is
bounded, and $\tilde{Q}_{I}:L^{\infty}(0,1)\rightarrow
L^{\infty}(0,1)$ is bounded, then by interpolation (see \cite{KPS})
$\tilde{Q}_{I}$ will be bounded on
$\bar{X}$. Thus%
\begin{align*}
\left\|  Q_{I}f\right\|  _{Z}  &  \leq\left\|  Q_{I}\left|  f\right|
\right\|  _{Z}=\left\|  \frac{I(t)}{t}\left(  Q_{I}\left|  f\right|  \right)
^{\ast}(t)\right\|  _{\bar{X}}\\
&  =\left\|  \frac{I(t)}{t}Q_{I}\left|  f\right|  (t)\right\|  _{\bar{X}%
}\text{ \ (since }Q_{I}\left|  f\right|  (t)\text{ is decreasing)}\\
&  =\left\|  \tilde{Q}_{I}\left|  f\right|  (t)\right\|  _{\bar{X}}\\
&  \preceq\left\|  f\right\|  _{\bar{X}}
\end{align*}
and Theorem \ref{t1} applies.

We going to prove now the claim.

 By the convexity of $I,$ $\frac{I(t)}{t}$ is
increasing for $0<t<1/2,$ thus
\[
\int_{0}^{s}\frac{I(t)}{t}dt\leq I(s),
\]
therefore
\begin{align*}
\left\|  \tilde{Q}_{I}f\right\|  _{1}  &  \leq\int_{0}^{1}\tilde{Q}_{I}\left(
\left|  f\right|  \right)  (t)dt\\
&  =\int_{0}^{1/2}\frac{I(t)}{t}\left(  \int_{t}^{1/2}\left|  f(s)\right|
\frac{ds}{I(s)}\right)  dt\\
&  =\int_{0}^{1/2}\frac{\left|  f(s)\right|  }{I(s)}\left(  \int_{0}^{s}%
\frac{I(t)}{t}dt\right)  ds\\
&  \leq\int_{0}^{1/2}\left|  f(s)\right|  ds\\
&  =\left\|  f\right\|  _{1}.
\end{align*}
Similarly$,$
\begin{align*}
\left\|  \tilde{Q}_{I}f\right\|  _{\infty}  &  \leq\sup_{0<t<1}\tilde{Q}%
_{I}\left(  \left|  f\right|  \right)(t) \\
&  \leq\sup_{0<t<1/2}\frac{I(t)}{t}\int_{t}^{1/2}\left|  f(s)\right|
\frac{ds}{I(s)}\\
&  \leq\left\|  f\right\|  _{\infty}\sup_{0<t<1/2}\left(  \frac{I(t)}{t}%
\int_{t}^{1/2}\frac{ds}{I(s)}\right) \\
&  \leq c\left\|  f\right\|  _{\infty}\text{ (by (\ref{peso})).}%
\end{align*}

To finish the proof of Theorem it remains to see that
\begin{equation}
\left\|  f_{\mu}^{\ast}\right\|  _{\bar{Y}}\preceq\left\|  f_{\mu}^{\ast
}(t)\frac{I(t)}{t}\right\|  _{\bar{X}}. \label{incl0}%
\end{equation}
Let $C_{\bar{Y}}$ be the constant quasi-norm of $\bar{Y}$, then
\begin{align}
\left\|  f_{\mu}^{\ast}\right\|  _{\bar{Y}}  &  =\left\|  f_{\mu}^{\ast
}(t)\chi_{(0,1/4)}(t)+f_{\mu}^{\ast}(t)\chi_{(1/4,1/2)}(t)+f_{\mu}^{\ast
}(t)\chi_{(1/2,3/4)}(t)+f_{\mu}^{\ast}(t)\chi_{(3/4,1)}(t)\right\|  _{\bar{Y}%
}\label{pri}\\
&  \leq4C_{\bar{Y}}^{2}\left\|  f_{\mu}^{\ast}(t)\chi_{(0,1/4)}(t)\right\|
_{\bar{Y}}.\nonumber
\end{align}
Since $f_{\mu}^{\ast}$ is decreasing,
\[
f_{\mu}^{\ast}(t)\chi_{(0,1/4)}(t)\leq\frac{1}{\ln2}\int_{t/2}^{t}f_{\mu
}^{\ast}(s)\frac{ds}{s}=\frac{1}{\ln2}\int_{t/2}^{1/2}f_{\mu}^{\ast}%
(s)\chi_{(0,1/4)}(s)\frac{I(s)}{s}\frac{ds}{I(s)}.
\]
Thus
\begin{align}
\left\|  f_{\mu}^{\ast}(t)\chi_{(0,1/4)}(t)\right\|  _{\bar{Y}}  &
\preceq\left\|  Q_{I}\left(  f_{\mu}^{\ast}(\cdot)\chi_{(0,1/4)}(\cdot
)\frac{I(\cdot)}{\cdot}\right)  (t/2)\right\|  _{\bar{Y}}\label{sec}\\
&  \preceq\left\|  f_{\mu}^{\ast}(t/2)\chi_{(0,1/4)}(t/2)\frac{I(t/2)}%
{t/2}\right\|  _{\bar{X}}\text{ (by (\ref{hip}))}\nonumber\\
&  \preceq\left\|
f_{\mu}^{\ast}(t)\chi_{(0,1/2)}(t)\frac{I(t)}{t}\right\|
_{\bar{X}}\text{ }\nonumber\\
&  \preceq\left\|  f_{\mu}^{\ast}(t)\frac{I(t)}{t}\right\|  _{\bar{X}%
}.\nonumber
\end{align}
Combining (\ref{pri}) and (\ref{sec}) we obtain (\ref{incl0}).
\end{proof}

\begin{remark}
\label{positiva}If $g\in Lip(\Omega)$ is positive with $m(g)=0,$ then it
follows from the previous Theorem that%
\[
\left\|  g_{\mu}^{\ast}(t)\frac{I(t)}{t}\right\|  _{\bar{X}}\preceq\left\|
\left|  \nabla g\right|  \right\|  _{X},
\]
\end{remark}

\subsection{Nash inequalities.}

In this section we obtain Nash type inequalities. We will focus in the
following type of probability measures.

\begin{definition}
Let $\mu$ be a probability measure on $\Omega,$ which admits a
convex isoperimetric estimator $I.$

\begin{enumerate}
\item  Let $\alpha>0.$ We will say that $\mu$ is $\alpha-$Cauchy type if
\[
I(t)=c_{\mu}\min(t,1-t)^{1+1/\alpha}.
\]

\item  Let $0<p<1.$ We will say that $\mu$ is a extended $p-$sub-exponential type if
\[
I(t)=c_{\mu}\min(t,1-t)\left(  \log\frac{1}{\min\left(  t,1-t\right)
}\right)  ^{1-1/p}.
\]
\end{enumerate}

In both cases $c_{\mu}$ denotes a positive constant.
\end{definition}

\begin{theorem}
\label{nash}The following Nash inequalities holds:

\begin{enumerate}
\item  Let $\mu$ be  $\alpha-$Cauchy type. Let $X$ be a r.i. space on
$\Omega$ with $\underline{\alpha}_{X}>0.$ Let $1<q\leq\infty$ such that
$0\leq1/q<$ $\underline{\alpha}_{X}.$ Then for all $f\in Lip(\Omega)$ positive
with $m(f)=0,$ we have%
\[
\left\|  f\right\|  _{X}\preceq\min_{r>1}\left(  r\left\|  \left|  \nabla
f\right|  \right\|  _{X}+\left\|  f\right\|  _{q,\infty}\phi_{X}(r^{-\alpha
})r^{\alpha/q}\right)  .
\]

\item  Let $\mu$ be extended $p-$sub-exponential type. Let $X$ be a r.i. space on
$\Omega$ $.$ Let $\beta>0.$ Then for all $f\in Lip(\Omega)$ positive with
$m(f)=0,$ we have%
\[
\left\|  f\right\|  _{X}\preceq\left\|  \left|  \nabla f\right|
\right\| _{X}^{\frac{\beta}{\beta+1}}\left\|  f\right\|
_{{X(\ln(\frac{1}{t})^{\beta(\frac{1}{p}-1)}})}^{\frac{1}{\beta+1}}.
\]
\end{enumerate}

\begin{proof}
Part $1$. Let $f\in Lip(\Omega)$ positive with $m(f)=0$ and let $\omega
(t)=t^{-1/\alpha}$ $\left(  0<t<1/2\right)  .$ Let $r>1$ and let $\beta>0$
that will be chosen later. Then%
\begin{align}
\left\|  f\right\|  _{X}  &  =\left\|  f_{\mu}^{\ast}\right\|  _{\bar{X}}%
\leq\left\|  f_{\mu}^{\ast}(t)\frac{\omega(t)}{\omega(t)}\chi_{\left\{
\omega<r)\right\}  }(t)\right\|  _{\bar{X}}+\left\|  f_{\mu}^{\ast}(t)\left(
\frac{\omega(t)}{\omega(t)}\right)  ^{\beta}\chi_{\left\{  \omega>r\right\}
}(t)\right\|  _{\bar{X}}\label{paso1}\\
&  \leq r\left\|  f_{\mu}^{\ast}(t)t^{1/\alpha}\right\|  _{\bar{X}}+r^{-\beta
}\left\|  f_{\mu}^{\ast}(t)t^{-\beta/\alpha}\chi_{\left(  0,r^{-\alpha
}\right)  }(t)\right\|  _{\bar{X}}\nonumber\\
&  =r\left\|  f_{\mu}^{\ast}(t)t^{1/\alpha}\right\|  _{\bar{X}}+r^{-\beta
}\left\|  t^{1/q}f_{\mu}^{\ast}(t)t^{-\beta/\alpha-1/q}\chi_{\left(
0,r^{-\alpha}\right)  }(t)\right\|  _{\bar{X}}\nonumber\\
&  \leq r\left\|  f_{\mu}^{\ast}(t)t^{1/\alpha}\right\|  _{\bar{X}}+r^{-\beta
}\sup_{t>0}(t^{1/q}f_{\mu}^{\ast}(t))\left\|  t^{-\beta/\alpha-1/q}%
\chi_{\left(  0,r^{-\alpha}\right)  }(t)\right\|  _{\bar{X}}\nonumber\\
&  \leq r\left\|  f_{\mu}^{\ast}(t)t^{1/\alpha}\right\|  _{\bar{X}}+r^{-\beta
}\left\|  f\right\|  _{q,\infty}\left\|  t^{-\beta/\alpha-1/q}\chi_{\left(
0,r^{-\alpha}\right)  }(t)\right\|  _{\Lambda(\bar{X})}\text{ (by
(\ref{tango}))}\nonumber\\
&  \preceq r\left\|  f_{\mu}^{\ast}(t)t^{1/\alpha}\right\|  _{\bar{X}%
}+r^{-\beta}\left\|  f\right\|  _{q,\infty}\int_{0}^{r^{-\alpha}}%
t^{-\beta/\alpha-1/q}\frac{\phi_{X}(t)}{t} \text{ (by Lemma \ref{indices})}\nonumber\\
&  =r\left\|  f_{\mu}^{\ast}(t)t^{1/\alpha}\right\|
_{\bar{X}}+r^{-\beta }\left\|  f\right\|  _{q,\infty}J(r).\nonumber
\end{align}
Let $0\leq1/q<\gamma<$ $\underline{\alpha}_{X},$ by Lemma \ref{indices},%
\[
\int_{0}^{r^{-\alpha}}t^{-\beta/\alpha-1/q}\frac{\phi_{X}(t)}{t^{\gamma
}t^{1-\gamma}}\preceq\frac{\phi_{X}(r^{-\alpha})}{r^{-\alpha\gamma}}\int
_{0}^{r^{-\alpha}}t^{-\beta/\alpha-1/q+\gamma-1}.
\]
At this stage we select $0<\beta<\alpha\left(  \gamma-1/q\right)  ,$ then
\[
\int_{0}^{r^{-\alpha}}t^{-\beta/\alpha-1/q+\gamma-1}\preceq r^{-\alpha\left(
-\beta/\alpha-1/q+\gamma\right)  },
\]
thus%
\[
J(r)\preceq\phi_{X}(r^{-\alpha})r^{\beta+\alpha/q}.
\]
Inserting this information in (\ref{paso1}) and by Remark \ref{positiva}, we
get%
\begin{align*}
\left\|  f\right\|  _{X}  &  \preceq r\left\|  f_{\mu}^{\ast}(t)t^{1/\alpha
}\right\|  _{\bar{X}}+\left\|  f\right\|  _{q,\infty}\phi_{X}(r^{-\alpha
})r^{\alpha/q}\\
&  \preceq r\left\|  \left|  \nabla f\right|  \right\|  _{X}+\left\|
f\right\|  _{q,\infty}\phi_{X}(r^{-\alpha})r^{\alpha/q}\text{.}%
\end{align*}
Part $2.$ Let $f\in Lip(\Omega)$ positive with $m(f)=0$ and let $\omega
(t)=\left(  \ln\frac{1}{t}\right)  ^{\frac{1}{p}-1}$ $\left(  0<t<1/2\right)
.$ Let $r>1$ and $\beta>0$.
\begin{align}
\left\|  f\right\|  _{X}  &  =\left\|  f_{\mu}^{\ast}\right\|  _{\bar{X}}%
\leq\left\|  f_{\mu}^{\ast}(t)\frac{\omega(t)}{\omega(t)}\chi_{\left\{
\omega<r)\right\}  }(t)\right\|  _{\bar{X}}+\left\|  f_{\mu}^{\ast}(t)\left(
\frac{\omega(t)}{\omega(t)}\right)  ^{\beta}\chi_{\left\{  \omega>r\right\}
}(t)\right\|  _{\bar{X}}\nonumber\\
&  \leq r\left\|  f_{\mu}^{\ast}(t)\left(  \ln\frac{1}{t}\right)  ^{1-\frac
{1}{p}}\right\|  _{\bar{X}}+r^{-\beta}\left\|  f_{\mu}^{\ast}(t)\left(
\ln\frac{1}{t}\right)  ^{\beta\left(  \frac{1}{p}-1\right)  }\right\|
_{\bar{X}}\nonumber\\
\nonumber\\
&  \preceq r\left\|  \left|  \nabla f\right|  \right\|
_{X(\log(\frac{1}{t})^{\beta(\frac{1}{p}-1)})}+r^{-\beta }\left\|
f\right\| _{X}\text{ \ (by Remark \ref{positiva}).}\nonumber
\end{align}
We finish taking the $\inf$ for $r>1$.
\end{proof}
\end{theorem}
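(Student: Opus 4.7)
The key input is Remark \ref{positiva}: for positive $g \in Lip(\Omega)$ with $m(g)=0$ we have $\|g^*_\mu(t)\,I(t)/t\|_{\bar X} \preceq \||\nabla g|\|_X$. The plan is to split $f^*_\mu$ with respect to a threshold $r$ on a weight $\omega$ comparable to $t/I(t)$: the low-$\omega$ portion then carries the factor $I(t)/t$ and is absorbed into $\||\nabla f|\|_X$ via Remark \ref{positiva}, while the high-$\omega$ portion is absorbed into the Lorentz or weighted-$X$ norm on the right-hand side. One finishes by optimizing over $r>1$.

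Concretely, set $\omega(t)=t^{-1/\alpha}$ for Part 1 and $\omega(t)=(\ln(1/t))^{1/p-1}$ for Part 2, so that in both cases $\omega(t)^{-1}\simeq I(t)/t$. For $r>1$ and $\beta>0$, inserting $\omega/\omega$ into each piece of the decomposition $f^*_\mu = f^*_\mu\chi_{\{\omega\le r\}}+f^*_\mu\chi_{\{\omega>r\}}$ gives the pointwise estimate
\[
f^*_\mu(t)\le r\,f^*_\mu(t)\,\omega(t)^{-1}+r^{-\beta}f^*_\mu(t)\,\omega(t)^{\beta}\chi_{\{\omega(t)>r\}}.
\]
Taking $\bar X$-norms, the first term is $\preceq r\||\nabla f|\|_X$ by Remark \ref{positiva}, and everything reduces to controlling the residual $r^{-\beta}\|f^*_\mu(t)\omega(t)^{\beta}\chi_{\{\omega>r\}}\|_{\bar X}$.

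For Part 2 this residual is, by definition of weighted q.r.i.\ spaces in Section \ref{ri}, precisely $r^{-\beta}\|f\|_{X(\ln(1/t)^{\beta(1/p-1)})}$, and the elementary identity $\min_{r>0}(rA+r^{-\beta}B)\simeq A^{\beta/(\beta+1)}B^{1/(\beta+1)}$ finishes the job. For Part 1 I would first use the pointwise bound $f^*_\mu(t)\le\|f\|_{q,\infty}\,t^{-1/q}$ and the embedding $\Lambda(\bar X)\subset\bar X$ from \eqref{tango} to reduce the residual to $\|f\|_{q,\infty}\int_0^{r^{-\alpha}}t^{-\beta/\alpha-1/q}\,d\phi_X(t)$. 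Then, invoking Lemma \ref{indices}, I replace $d\phi_X(t)$ by $\phi_X(t)/t\,dt$ and bound $\phi_X(t)\preceq \phi_X(r^{-\alpha})(tr^{\alpha})^{\gamma}$ on $(0,r^{-\alpha})$ for some $1/q<\gamma<\underline{\alpha}_X$; choosing $0<\beta<\alpha(\gamma-1/q)$ makes the integral converge at $0$ and evaluates the residual to $\preceq\|f\|_{q,\infty}\phi_X(r^{-\alpha})r^{\beta+\alpha/q}$, and the $r^{-\beta}$ prefactor cancels the $\beta$-dependence.

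The main obstacle is Part 1: one must calibrate the two independent parameters $r$ (the splitting level) and $\beta$ (the $\omega$-exponent in the tail) against the Boyd-index constraint $1/q<\underline{\alpha}_X$. The hypothesis provides exactly the room needed to select $1/q<\gamma<\underline{\alpha}_X$ and then $0<\beta<\alpha(\gamma-1/q)$, but any careless exponent tracking will either destroy the convergence of the boundary integral or fail to cancel the surplus powers of $r$. Part 2 carries none of this subtlety, since the weight $(\ln(1/t))^{\beta(1/p-1)}$ already matches the r.i.\ norm appearing in the conclusion.
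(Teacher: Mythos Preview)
Your proposal is correct and follows essentially the same route as the paper: the same choice of weight $\omega$ in each part, the same threshold splitting at level $r$ with the $\omega/\omega$ and $(\omega/\omega)^\beta$ insertions, the same appeal to Remark~\ref{positiva} for the low-$\omega$ piece, and in Part~1 the same chain $\bar X\hookleftarrow\Lambda(\bar X)$ via \eqref{tango}, Lemma~\ref{indices} to handle $\phi_X$, and the choice $1/q<\gamma<\underline{\alpha}_X$, $0<\beta<\alpha(\gamma-1/q)$ to make the integral converge and cancel the $r^{\pm\beta}$. Your exponent bookkeeping is accurate and your final optimization $\min_{r>0}(rA+r^{-\beta}B)\simeq A^{\beta/(\beta+1)}B^{1/(\beta+1)}$ is exactly how Part~2 concludes.
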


\begin{remark}
Let $X$ be a r.i. space on $\Omega$ with $\underline{\alpha}_{X}>0.$ Let
$1<q\leq\infty$ such that $0\leq1/q<$ $\underline{\alpha}_{X}.$ Then%
\[
L^{q,\infty}\left(  \Omega\right)  \subset\Lambda(X)\subset X\left(
\Omega\right)  .
\]
Effectively, by Lemma \ref{indices}%
\[
\left\|  f\right\|  _{\Lambda(X)}=\int_{0}^{1}f^{\ast}(t)\frac{\phi_{X}(t)}%
{t}dt\leq\left\|  f\right\|  _{q,\infty}\int_{0}^{1}\frac{\phi_{X}%
(t)}{t^{1+1/q}}dt.
\]
The last integral is finite since taking $0\leq1/q<\gamma<$ $\underline
{\alpha}_{X},$ we get%
\[
\int_{0}^{1}\frac{\phi_{X}(t)}{t^{1+1/q}}dt=\int_{0}^{1}t^{1/q+\gamma-1}%
\frac{\phi_{X}(t)}{t^{\gamma}}dt\preceq\int_{0}^{1}t^{1/q+\gamma-1}<\infty.
\]
\end{remark}

\section{Examples and applications\label{five}}

In this section we will apply the previous work to the probability measures
introduced in examples \ref{ex1}, \ref{ex2} and \ref{ex3}.

\subsection{Cauchy type laws}

Consider the probability measure space $(\mathbb{R}^{n},d,\mu)$ where $d$ is
the Euclidean distance and $\mu$ is the probability measure introduced in
Example \ref{ex1}. Such measures have been introduced by Borell \cite{Bo} (see
also \cite{bob}). Prototypes of these probability measures are the generalized
Cauchy distributions\footnote{These measures are Barenblatt solutions of the
porous medium equations and appear naturally in weighted porous medium
equations, giving the decay rate of this nonlinear semigroup towards the
equilibrium measure, see \cite{Va} and \cite{DGGW}.}:%
\[
d\mu(x)=\frac{1}{Z}\left(  \left(  1+\left|  x\right|  ^{2}\right)
^{1/2}\right)  ^{-(n+\alpha)},\text{ \ }\alpha>0.
\]

A convex isoperimetric estimator for these measures is (see \cite[Proposition
4.3]{CGGR}):
\[
I(t)=\min(t,1-t)^{1+1/\alpha}.
\]
Obviously for $0<t<1/2,$ we have%
\[
\int_{t}^{1/2}\frac{ds}{s^{1+1/\alpha}}\preceq\frac{t}{t^{1+1/\alpha}}.
\]

Thus by Theorem \ref{t2}, given a r.i. space $X$ on $\mathbb{R}^{n}$
we get
\[
\inf_{c\in\mathbb{R}}\left\|  \left(  g-c\right)  _{\mu}^{\ast}\frac
{\min(t,1-t)^{1+1/\alpha}}{t}\right\|  _{\bar{X}}\preceq\left\|  \left|
\nabla g\right|  \right\|  _{X},\text{ \ \ \ }\left(  g\in Lip(\mathbb{R}%
^{n})\right)  .
\]

\begin{proposition}
Let $1\leq p<\infty,$ $1\leq q\leq\infty.$ \ For all $f\in Lip(\mathbb{R}%
^{n})$ positive with $m(f)=0,$ we get

\begin{enumerate}
\item
\[
\left\|  f\right\|  _{\frac{p\alpha}{p+\alpha},q}\preceq\left\|
\left| \nabla f\right|  \right\|  _{p,q}.
\]

\item  For all $s>p$
\[
\left\|  f\right\|  _{p,q}\preceq\left\|  \left|  \nabla f\right|
\right\|
_{p,q}^{\frac{\beta}{\beta+1}}\left\|  f\right\|  _{s,\infty}^{\frac{1}%
{\beta+1}}%
\]
where $\beta=\alpha(\frac{1}{p}-\frac{1}{s}).$
\end{enumerate}
\end{proposition}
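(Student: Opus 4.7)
The plan is to obtain both parts as direct consequences of the general Sobolev--Poincar\'{e} and Nash inequalities proved earlier in the paper, specializing to the Lorentz space $X = L^{p,q}(\mathbb{R}^n)$ and to the isoperimetric estimator $I(t) = \min(t,1-t)^{1+1/\alpha}$.

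For Part $1$, I would first note that $L^{p,q}$ has Boyd index $\underline{\alpha}_{L^{p,q}} = 1/p > 0$, so Theorem \ref{t2} applies to $X = L^{p,q}(\mathbb{R}^n)$ (condition (\ref{peso}) also holds, as was verified above for this $I$). Since $f \geq 0$ and $m(f) = 0$ force $f_\mu^{\ast}(t) = 0$ for $t \geq 1/2$, Remark \ref{positiva} yields
\[
\left\|  f_\mu^{\ast}(t) \frac{\min(t,1-t)^{1+1/\alpha}}{t}\right\|_{\overline{L^{p,q}}} \preceq \left\| |\nabla f|\right\|_{p,q}.
\]
Now the weight on the left is $\min(t,1-t)^{1/\alpha}$, which on the effective support $(0,1/2)$ equals $t^{1/\alpha}$. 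A direct computation using $\frac{1}{p} + \frac{1}{\alpha} = \frac{p+\alpha}{p\alpha}$ gives
\[
\left\|  f_\mu^{\ast}(t) t^{1/\alpha}\right\|_{\overline{L^{p,q}}} = \left(\int_0^{1/2} \bigl(t^{1/p+1/\alpha} f_\mu^{\ast}(t)\bigr)^q \frac{dt}{t}\right)^{1/q} \simeq \|f\|_{p\alpha/(p+\alpha),\,q},
\]
which is the desired inequality.

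For Part $2$, I would invoke Theorem \ref{nash}(1) with $X = L^{p,q}$, using the parameter $s$ in place of the $q$ appearing in the statement of that theorem (to avoid a conflict of notation). The hypothesis $0 \leq 1/s < \underline{\alpha}_{L^{p,q}} = 1/p$ is exactly $s > p$. Since $\phi_{L^{p,q}}(t) = t^{1/p}$, the general bound reads
\[
\|f\|_{p,q} \preceq \min_{r>1} \Bigl(r\,\||\nabla f|\|_{p,q} + \|f\|_{s,\infty}\, r^{-\alpha/p}\, r^{\alpha/s}\Bigr) = \min_{r>1}\Bigl(r\,\||\nabla f|\|_{p,q} + \|f\|_{s,\infty}\, r^{-\beta}\Bigr),
\]
with $\beta = \alpha(1/p - 1/s) > 0$. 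The optimal $r$ is $r \simeq (\|f\|_{s,\infty}/\||\nabla f|\|_{p,q})^{1/(\beta+1)}$ (one should check this choice is $> 1$; otherwise the conclusion is trivial from the gradient term alone), and substitution produces
\[
\|f\|_{p,q} \preceq \||\nabla f|\|_{p,q}^{\beta/(\beta+1)} \|f\|_{s,\infty}^{1/(\beta+1)}.
\]

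Neither step presents a real obstacle, since the hard analytic work was already done in Theorems \ref{t2} and \ref{nash}. The only mild subtleties are the verification that $L^{p,q}$ falls under the hypotheses of those theorems (Boyd indices and the integrability condition (\ref{peso}) for the specific $I$), and the bookkeeping that the effective support $(0,1/2)$ of $f_\mu^{\ast}$ lets us replace $\min(t,1-t)^{1/\alpha}$ by the pure power $t^{1/\alpha}$ when identifying the weighted Lorentz norm.
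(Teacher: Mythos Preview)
Your approach is essentially the same as the paper's: Part~1 via Theorem~\ref{t2}/Remark~\ref{positiva} specialized to $X=L^{p,q}$, and Part~2 as a direct application of Theorem~\ref{nash}(1) followed by optimization in $r$.

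There is, however, one technical slip in Part~1. You write
\[
\left\|  f_\mu^{\ast}(t)\, t^{1/\alpha}\right\|_{\overline{L^{p,q}}} = \left(\int_0^{1/2} \bigl(t^{1/p+1/\alpha} f_\mu^{\ast}(t)\bigr)^q \frac{dt}{t}\right)^{1/q},
\]
but this is not a direct computation: the $\overline{L^{p,q}}$ norm of a function $h$ on $(0,1)$ is $\bigl(\int_0^1 (t^{1/p} h^{\ast}(t))^q\,dt/t\bigr)^{1/q}$, and the function $t\mapsto f_\mu^{\ast}(t)\,t^{1/\alpha}$ is \emph{not} decreasing (it is the product of a decreasing and an increasing function), so you cannot simply drop the inner rearrangement. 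What is true is the equivalence
\[
\int_0^1 \Bigl[\bigl(t^{1/\alpha} f_\mu^{\ast}(t)\bigr)^{\ast}\, t^{1/p}\Bigr]^q \frac{dt}{t}\ \simeq\ \int_0^1 \bigl(t^{1/\alpha+1/p} f_\mu^{\ast}(t)\bigr)^q \frac{dt}{t},
\]
which is a known (but nontrivial) fact; the paper invokes \cite[Page~76]{KPS} for exactly this step. Once you replace your ``$=$'' by ``$\simeq$'' and cite that result, the argument is complete and matches the paper's.
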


\begin{proof}
1) By Theorem \ref{t2} we get
\[
\left\|  f_{\mu}^{\ast}t^{\frac{1}{\alpha}}\right\|
_{p,q}\preceq\left\| \left|  \nabla f\right|  \right\|  _{p,q}.
\]
Now by \cite[Page 76]{KPS} we have that
\[
\left\|  f_{\mu}^{\ast}t^{\frac{1}{\alpha}}\right\|  _{p,q}^{q}=\int_{0}%
^{1}\left[  \left(  t^{\frac{1}{\alpha}}f_{\mu}^{\ast}(t)\right)
^{\ast }t^{\frac{1}{p}}\right]
^{q}\frac{dt}{t}\simeq\int_{0}^{1}\left(  t^{\frac
{1}{\alpha}+\frac{1}{p}}f_{\mu}^{\ast}(t)\right)
^{q}\frac{dt}{t}=\left\| f\right\|
_{\frac{p\alpha}{p+\alpha},q}^{q}.
\]

2) is a direct application of Theorem \ref{nash}.
\end{proof}

\begin{remark}
If in the previous Proposition we take $p=q=1,$ we obtain%
\begin{equation}
\left\|  f\right\|  _{\frac{\alpha}{\alpha+1},1}\preceq\left\|
\left|  \nabla f\right|  \right\|  _{1}.\label{mil01}
\end{equation}
If $\frac{1}{q}=\frac{1}{p}+\frac{1}{\alpha},$ then we get%
\begin{equation}
\left\|  f\right\|  _{p,\frac{p(1+\alpha)}{\alpha}}\preceq\left\|
\left| \nabla f\right|  \right\|
_{q,\frac{p(1+\alpha)}{\alpha}}.\label{mil02}
\end{equation}
For $p\geq 1$ and $s=\infty$, we have that
\begin{equation}
\left\|  f\right\|  _{p}\preceq\left\|  \left|  \nabla f\right|
\right\|
_{p}^{\frac{\beta}{\beta+1}}\left\|  f\right\|  _{\infty}^{\frac{1}%
{\beta+1}}.\label{mil03}
\end{equation}
Inequalities \ref{mil01} and \ref{mil02} were proved in
\cite[Proposition 5.13]{Mil}. Inequality \ref{mil03} was obtained in
\cite[Proposition 5.15]{Mil}.
\end{remark}

We close this section with the following optimality result:

\begin{theorem}
\label{alph}
Let $\alpha>0.$ Let $\bar{X}$ be a r.i. space on $(0,1)$ and let $Z$
be a q.r.i. space on $(0,1).$ Assume that for any probability
measure $\mu$ be of $\alpha-$Cauchy type in $\mathbb{R}^{n},$
there is a $C_{\mu}>0,$ such that for all $f\in Lip(\mathbb{R}^{n})$ positive
with $m(f)=0,$ we get
\[
\left\|    f  _{\mu}^{\ast}\right\|
_{Z}\leq C_{\mu}\left\|  \left|  \nabla f\right|  _{\mu}^{\ast}\right\|
_{\bar{X}}.
\]
Then for all $g\in Lip(\mathbb{R}^{n})$%
\[
\left\|    g  _{\mu}^{\ast}\right\|
_{Z}\preceq\left\| g _{\mu}^{\ast
}(t)\frac{I(t)}{t}\right\|  _{\bar{X}}%
\]
\end{theorem}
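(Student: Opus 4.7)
Since the right-hand side depends on $g$ only through $h:=g_\mu^*$, my plan is to construct, for each such $h$, a Lipschitz test function $\tilde f$ on $\mathbb{R}^n$ with $\tilde f_\mu^*\simeq h$ and $\||\nabla\tilde f|_\mu^*\|_{\bar X}\lesssim\|h(t)I(t)/t\|_{\bar X}$; applying the Sobolev--Poincar\'e hypothesis to $\tilde f$ then delivers the target bound. I would first fix a concrete $\alpha$-Cauchy measure $d\mu=Z_0^{-1}V^{-(n+\alpha)}\,dx$ with $V(x)=\sqrt{1+|x|^2}$ (so $|\nabla V|\leq 1$), introduce the tail $\sigma(r):=\mu\{V>r\}$, and observe that the random variable $U(x):=\sigma(V(x))$ is \emph{uniformly distributed on $(0,1)$} under $\mu$; a direct computation on the Cauchy tails yields the asymptotic $|\sigma'(\sigma^{-1}(t))|\simeq I(t)$ on $(0,1/2)$. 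After decomposing $g-m(g)$ into positive and negative parts and smoothing, it suffices to treat positive Lipschitz $g$ with $m(g)=0$, so $h$ is supported on $(0,1/2]$ and may be taken Lipschitz.

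The central construction is $\tilde f(x):=H(U(x))=H(\sigma(V(x)))$, where
\[
H(t):=\int_t^{1/2}h(s)\,\frac{ds}{s}
\]
is the Hardy transform of $h$; thus $H$ is Lipschitz, decreasing on $(0,1/2)$, vanishes at $1/2$, and $|H'(t)|=h(t)/t$. By uniformity of $U$, $\tilde f_\mu^*=H$; $\tilde f$ is positive, Lipschitz, and vanishes on the set $\{U\geq 1/2\}$ of $\mu$-measure $1/2$, so $m(\tilde f)=0$. The chain rule together with $|\nabla V|\leq 1$ gives $|\nabla\tilde f|(x)\leq \Psi(U(x))$, where
\[
\Psi(t):=\frac{h(t)}{t}\,\bigl|\sigma'(\sigma^{-1}(t))\bigr|\lesssim\frac{h(t)\,I(t)}{t}\qquad(t\in(0,1/2)).
\]
Since $U$ is uniform, the distribution of $\Psi\circ U$ under $\mu$ coincides with that of $\Psi$ under Lebesgue, so $|\nabla\tilde f|_\mu^*\leq\Psi^*$ pointwise and the r.i.\ property of $\bar X$ yields $\||\nabla\tilde f|_\mu^*\|_{\bar X}\leq\|\Psi\|_{\bar X}\lesssim\|h(t)I(t)/t\|_{\bar X}$. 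Applying the hypothesis to $\tilde f$ therefore produces $\|H\|_Z=\|\tilde f_\mu^*\|_Z\leq C_\mu\|h\,I/t\|_{\bar X}$. The passage from $\|H\|_Z$ to $\|h\|_Z$ is via the doubling estimate
\[
H(t)\geq\int_t^{2t}h(s)\,\frac{ds}{s}\geq(\log 2)\,h(2t),\qquad 0<t<1/4,
\]
which is immediate because $h$ is decreasing; combined with the boundedness of the dilation operator on the q.r.i.\ space $Z$, this yields $\|h\|_Z\lesssim\|h(2\cdot)\|_Z\lesssim\|H\|_Z\lesssim\|h\,I/t\|_{\bar X}$, completing the proof.

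I expect the hardest step to be the gradient computation in the central construction: one must verify the tail asymptotic $|\sigma'(\sigma^{-1}(t))|\simeq I(t)$ from the explicit form of $\mu$ (via $\sigma(r)=\int_{\{V>r\}}V^{-(n+\alpha)}\,dx/Z_0$ and differentiation), and carefully track constants so that the pointwise majorization $\Psi(t)\lesssim h(t)I(t)/t$ has a universal constant independent of $h$. Once this is in place, the transition from the pointwise bound to the r.i.\ norm inequality is automatic because $\bar X$ is r.i.\ and the pointwise bound on $\Psi$ passes to its decreasing rearrangement. A secondary technical point is the boundedness of the dilation operator on $Z$ used in the final step, which is a standard fact for r.i.\ spaces; the approximation arguments needed to smooth $h$ and to reduce to the positive, median-zero case are routine.
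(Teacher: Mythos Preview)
Your strategy is correct and parallels the paper's: build an explicit test function, apply the Sobolev hypothesis to it, and finish with a doubling argument. The differences are in the construction and in how the pieces are packaged.

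The paper does \emph{not} use the radial measure $d\mu=Z_0^{-1}(1+|x|^2)^{-(n+\alpha)/2}dx$. Instead it takes the one–dimensional Cauchy law $d\mu(s)=\tfrac{\alpha}{2}(1+s^2)^{-(1+\alpha)/2}ds$ on $\mathbb{R}$, forms the product measure $\mu^n$ on $\mathbb{R}^n$ (still of $\alpha$-Cauchy type), and uses a test function depending only on $x_1$: $u(x)=F(H(x_1))$ with $H$ the one-dimensional distribution function and $F(t)=\int_t^1 f(s)\,\tfrac{ds}{I_\mu(s)}$. Because in one dimension $I_\mu(H(x_1))=\varphi(H^{-1}(H(x_1)))=H'(x_1)$, the chain rule gives the \emph{exact} identity $|\nabla u(x)|=f(H(x_1))$, so no tail asymptotic $|\sigma'(\sigma^{-1}(t))|\simeq I(t)$ is needed. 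This yields directly $u_{\mu^n}^{\ast}=F$ and $|\nabla u|_{\mu^n}^{\ast}=f^{\ast}$, hence the clean operator bound $\|Q_I f\|_Z\le C\|f\|_{\bar X}$; the paper then simply invokes Theorem~\ref{t2} to conclude. Your inequality $\|Qh\|_Z\lesssim\|hI/t\|_{\bar X}$ is equivalent (put $f=hI/t$, so $Q_If=Qh$), and your final doubling step is essentially the relevant part of the proof of Theorem~\ref{t2} done by hand. What the paper's route buys is that the gradient computation is exact and trivial, and the conclusion follows from an already-established lemma; what your route buys is a self-contained argument that avoids the product-measure detour, at the cost of the asymptotic calculation for $\sigma'$.

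One caution: your sentence ``it suffices to treat positive Lipschitz $g$ with $m(g)=0$, so $h$ is supported on $(0,1/2]$'' is not a valid reduction on the \emph{given} $g$, since the conclusion depends only on $h=g_\mu^{\ast}$, which for arbitrary Lipschitz $g$ need not vanish on $(1/2,1)$. The paper handles this in Theorem~\ref{t2} by the elementary estimate $\|h\|_{Z}\le 4C_Z^2\|h\chi_{(0,1/4)}\|_{Z}$ (splitting $(0,1)$ into four pieces and using that $h$ is decreasing) before the doubling step; you should insert the same device.
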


\begin{proof}
Let $\mu$ be the Cauchy probability measure on $\mathbb{R}$ defined by%
\[
d\mu(s)=\frac{\alpha}{2\left(  1+\left|  s\right|  ^{2}\right)  ^{\frac
{1+\alpha}{2}}}ds=\varphi(s)dx,\text{ \ \ \ }s\in\mathbb{R}\text{.}%
\]
It is known (see \cite[Proposition 5.27]{CGGR} and \cite{FPR}) that
its isoperimetric profile is given by
\[
I_{\mu}(t)=\varphi\left(  H^{-1}(t\right)  )=\alpha2^{1/\alpha}\min
(t,1-t)^{1+1/\alpha},\text{ \ \ \ }t\in\lbrack0,1],
\]
where $H$ is the distribution function of $\mu$, i.e. $H:\mathbb{R}%
\rightarrow(0,1)$ is the increasing function given by
\[
H(r)=\int_{-\infty}^{r}\varphi(t)dt.
\]

Consider on $\mathbb{R}^{n}$ the product measure $\mu^{n}$, by Proposition
5.27 of \cite{CGGR} the function%
\[
I(t)=\frac{c_{\alpha}}{n^{1/\alpha}}\min(t,1-t)^{1+1/\alpha}%
\]
is a convex isoperimetric estimator of $\mu^{n}$ ($c_{\alpha}$ denotes a
positive constant depending only of $\alpha).$

Given a\ positive measurable function $f\ $with $suppf$ $\subset(0,1/2),$
consider
\[
F(t)=\int_{t}^{1}f(s)\frac{ds}{I_{\mu}(s)},\text{ \ \ \ }t\in(0,1),
\]
and define
\[
u(x)=F(H(x_{1})),\text{ \ \ \ \ \ }x\in\mathbb{R}^{n}.
\]
Then,%
\[
\left|  \nabla u(x)\right|  =\left|  \frac{\partial}{\partial x_{1}%
}u(x)\right|  =\left|  -f(H(x_{1}))\frac{H^{\prime}(x_{1})}{I_{\mu}(H(x_{1}%
))}\right|  =f(H(x_{1})).
\]
Let $A$ be a Young's function and let $s=H(x_{1})$. Then,
\begin{align*}
\int_{\mathbb{R}^{n}}A(f(H(x_{1})))d\mu^{n}(x)  &  =\int_{\mathbb{R}%
}A(f(H(x_{1})))d\mu(x_{1})\\
&  =\int_{0}^{1}A(f(s))ds.
\end{align*}
Therefore, by \cite[exercise 5 pag. 88]{BS}
\[
\left|  \nabla u\right|  _{\mu^{n}}^{\ast}(t)=f^{\ast}(t).
\]
Similarly%
\[
u_{\mu^{n}}^{\ast}(t)=\int_{t}^{1}f(s)\frac{ds}{I_{\mu}(s)}.
\]
Since $m(u)=0$, by the hypothesis we get
\begin{align*}
\left\|  \int_{t}^{1}f(s)\frac{ds}{I_{\mu}(s)}\right\|  _{Z}  &  =\left\|
u_{\mu^{n}}^{\ast}\right\|  _{Z}\\
&  \leq C_{\mu^{n}}\left\|  \left|  \nabla f\right|  _{\mu^{n}}^{\ast
}\right\|  _{\bar{X}}\\
&  =C_{\mu^{n}}\left\|  f^{\ast}(t)\right\|  _{\bar{X}}\\
&  =C_{\mu^{n}}\left\|  f\right\|  _{\bar{X}}.
\end{align*}
Finally, from
\[
I_{\mu}(t)=\frac{\alpha2^{1/_{\alpha}}n^{1/\alpha}}{c_{\alpha}}I(t)
\]
we have that
\[
\left\|  Q_{I}f\right\|  _{Z}\leq\frac{c_{\alpha}C_{\mu^{n}}}{\alpha
2^{1/_{\alpha}}n^{1/\alpha}}\left\|  f\right\|  _{\bar{X}}%
\]
and the results follows by Theorem \ref{t2}.
\end{proof}

\subsection{Extended $p-$sub-exponential law}
Consider the probability measure on $\mathbb{R}^{n}$ defined by
\[
d\mu(x)=\frac{1}{Z_{p}}e^{-V(x)^{p}}dx=\varphi(x)dx
\]
for some positive convex function $V:\mathbb{R}^{n}\to (0,\infty)$
and $p\in(0,1)$.

A typical example is $V(x) = |x|^p$, and $0 < p < 1$, which yields
to sub-exponential type law.

A convex isoperimetric estimator for these type of measures is (see
\cite[Proposition 4.5]{CGGR} and \cite{FPR}):
\[
I(t)=c_{p}\min\left(  t,1-t\right)  \left(  \log\frac{1}{\min\left(
t,1-t\right)  }\right)  ^{1-1/p}.
\]

By Theorem \ref{t2}, given a r.i. space $X$ on $\mathbb{R}^{n}$ with
$\underline{\alpha}_{X}>0,$ we get
\[
\inf_{c\in\mathbb{R}}\left\|  \left(  g-c\right)  _{\mu}^{\ast}\frac{c_{p}%
\min\left(  t,1-t\right)  \left(  \log\frac{1}{\min\left(
t,1-t\right) }\right)  ^{1-1/p}}{t}\right\|
_{\bar{X}}\preceq\left\|  \left|  \nabla
g\right|  \right\|  _{X},\text{ \ \ \ }\left(  g\in Lip(\mathbb{R}%
^{n})\right)  .
\]

In the particular case that $X=L^{r,q}$ we obtain

\begin{proposition}
Let $1\leq r<\infty,$ $1\leq q<\infty.$ \ For all $f\in
Lip(\mathbb{R}^{n})$ positive with $m(f)=0,$ we get

\begin{enumerate}
\item
\[
\left\|  f\right\|  _{L^{r,q}(\log L)^{1-1/p}}\preceq\left\|  \left|
\nabla f\right|  \right\|  _{r,q}.
\]

\item  For all $\beta>0$
\[
\left\|  f\right\|  _{r,q}\preceq\left\|  \left|  \nabla f\right|
\right\| _{r,q}^{\frac{\beta}{\beta+1}}\left\|  f\right\|
_{L^{r,q}(\log
L)^{\beta(1-1/p)}}^{\frac{1}{\beta+1}}%
\]
\end{enumerate}
\end{proposition}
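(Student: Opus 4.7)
The plan is to specialize the general Sobolev--Poincar\'e and Nash inequalities already established in the paper to the r.i.\ space $X = L^{r,q}(\mathbb{R}^{n})$, using the convex isoperimetric estimator $I(t) = c_p \min(t,1-t)(\log 1/\min(t,1-t))^{1-1/p}$ associated to the extended $p$-sub-exponential law. For part 1 I first verify that $L^{r,q}$ satisfies the hypotheses of Theorem \ref{t2}: its lower Boyd index is $\underline{\alpha}_{L^{r,q}} = 1/r > 0$ for $1 \leq r < \infty$. Remark \ref{positiva} then yields
\[
\left\| f_{\mu}^{\ast}(t)\, \frac{I(t)}{t} \right\|_{L^{r,q}} \preceq \left\| \,|\nabla f|\, \right\|_{r,q}.
\]
Since $f \geq 0$ has median zero, $f_{\mu}^{\ast}$ vanishes on $[1/2,1]$, and on $(0,1/2)$ one has $I(t)/t = c_p(\log 1/t)^{1-1/p} \simeq c_p(1+\log 1/t)^{1-1/p}$ (up to absolute constants, since $\log(1/t) \geq \log 2 > 0$ there). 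It then remains to identify
\[
\left\| f_{\mu}^{\ast}(t)\,(1+\log 1/t)^{1-1/p}\,\chi_{(0,1/2)}(t) \right\|_{L^{r,q}} \simeq \|f\|_{L^{r,q}(\log L)^{1-1/p}}.
\]
This amounts to comparing the rearranged quasi-norm defining $L^{r,q}$ with the direct integral defining the Lorentz--Zygmund quasi-norm, a step I would perform using the standard slow-variation properties of logarithmic weights.

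For part 2 I apply Theorem \ref{nash} part 2 directly with $X = L^{r,q}$. The resulting weighted q.r.i.\ space $X(w)$ has weight $w(t) = (\log 1/t)^{\beta(1/p-1)}$; since the exponent is positive, the weight is monotone decreasing in $t$, so the product $f_{\mu}^{\ast}(t) w(t)$ of two nonnegative decreasing functions is itself decreasing. Consequently its decreasing rearrangement equals itself, and the quasi-norm $\|f\|_{X(w)}$ reduces to the direct integral $\left(\int_{0}^{1}(t^{1/r}(\log 1/t)^{\beta(1/p-1)}f_{\mu}^{\ast}(t))^{q}\,dt/t\right)^{1/q}$, which (once more using comparability of $\log$ and $1+\log$ on the support of $f_{\mu}^{\ast}$) coincides with the Lorentz--Zygmund quasi-norm of $f$ in the space appearing in the statement. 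The claimed Nash inequality then follows at once from Theorem \ref{nash}.

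The main obstacle lies in part 1: the factor $(1+\log 1/t)^{1-1/p}$ has \emph{negative} exponent, so it is increasing in $t$ while $f_{\mu}^{\ast}$ is decreasing, and hence the product is not monotone on $(0,1/2)$. Identifying the $L^{r,q}$ quasi-norm of this product (which by definition uses its decreasing rearrangement) with the direct integral defining $\|f\|_{L^{r,q}(\log L)^{1-1/p}}$ therefore requires care. The argument relies on the fact that the logarithmic weight varies slowly enough that one can majorize and minorize the rearranged integral by the direct integral up to multiplicative constants depending only on $r$, $q$, $p$, a routine but technical step in Lorentz--Zygmund space theory; once granted, it closes the proof.
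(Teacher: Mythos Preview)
Your approach is correct and matches the paper's. The paper does not write out a separate proof for this Proposition; it states it immediately after the general inequality obtained from Theorem~\ref{t2} for $X$ with $\underline{\alpha}_X>0$, with the words ``In the particular case that $X=L^{r,q}$ we obtain\ldots''. The intended argument is therefore exactly the specialization you carry out: part~1 follows from Remark~\ref{positiva} applied to $X=L^{r,q}$ (using $\underline{\alpha}_{L^{r,q}}=1/r>0$) together with the identification of $\|f_\mu^\ast(t)\,I(t)/t\|_{\bar X}$ with the Lorentz--Zygmund quasi-norm, and part~2 is a direct application of Theorem~\ref{nash}\,(2). Your explicit discussion of the non-monotonicity obstacle in part~1 is more detailed than anything the paper provides; in the analogous $\alpha$-Cauchy Proposition the paper handles the corresponding step (power weight instead of logarithmic) by a one-line citation to \cite[p.~76]{KPS}, and the same slow-variation argument you describe is the standard way to deal with the logarithmic case.
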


\begin{theorem}
Let $p\in(0,1).$ Let $\bar{X}$ be a r.i. space on $(0,1)$ and let
$Z$ be a q.r.i. space on $(0,1).$ Assume that for any extended $p-$
sub-exponential law $\mu$ in $\mathbb{R}^{n},$ there is a
$C_{\mu}>0,$ such that for all $f\in Lip(\mathbb{R}^{n})$ positive
with $m(f)=0,$ we get
\[
\left\|  f_{\mu}^{\ast}\right\|  _{Z}\leq C_{\mu}\left\|  \left|
\nabla f\right|  _{\mu}^{\ast}\right\|  _{\bar{X}}.
\]
Then, for all $g\in Lip(\mathbb{R}^{n})$, we get
\[
\left\|  g_{\mu}^{\ast}\right\|  _{Z}\preceq\left\|  g_{\mu}^{\ast}%
(t)\frac{I(t)}{t}\right\|  _{\bar{X}}%
\]
\end{theorem}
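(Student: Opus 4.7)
The plan is to mirror the proof of Theorem~\ref{alph} with the one-dimensional Cauchy measure replaced by a one-dimensional extended $p$-sub-exponential model. Concretely, take on $\mathbb{R}$ the probability measure
\[
d\mu(s)=Z_{p}^{-1}e^{-|s|^{p}}\,ds=\varphi(s)\,ds,
\]
whose isoperimetric profile is known (\cite{CGGR},\cite{FPR}) to satisfy $I_{\mu}(t)=\varphi(H^{-1}(t))\simeq c_{p}\min(t,1-t)(\log\tfrac{1}{\min(t,1-t)})^{1-1/p}$, with $H$ the cumulative distribution function of $\mu$. Tensorization results (as in \cite[Prop.~5.27]{CGGR}) then show that the product measure $\mu^{n}$ on $\mathbb{R}^{n}$ admits a convex isoperimetric estimator proportional to the function $I$ appearing in the statement; in particular, $\mu^{n}$ is an extended $p$-sub-exponential law on $\mathbb{R}^{n}$ to which the hypothesis of the theorem applies.

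Given a positive measurable $f$ with $\mathrm{supp}\,f\subset(0,1/2)$, I define, exactly as in Theorem~\ref{alph}, the test function
\[
u(x)=F(H(x_{1})),\qquad F(t)=\int_{t}^{1}f(s)\,\frac{ds}{I_{\mu}(s)},\quad x\in\mathbb{R}^{n}.
\]
The very same computation yields $|\nabla u(x)|=f(H(x_{1}))$, and via the change of variables $s=H(x_{1})$ together with \cite[Exercise 5, p.~88]{BS},
\[
|\nabla u|_{\mu^{n}}^{\ast}(t)=f^{\ast}(t),\qquad u_{\mu^{n}}^{\ast}(t)=\int_{t}^{1}f(s)\,\frac{ds}{I_{\mu}(s)}=Q_{I_{\mu}}f(t).
\]
Since $\mathrm{supp}\,f\subset(0,1/2)$ forces $F(1/2)=0$, $u$ vanishes on $\{x_{1}\geq 0\}$, a set of $\mu^{n}$-measure $1/2$ by the symmetry of $\varphi$; hence $m(u)=0$ and $u\in Lip(\mathbb{R}^{n})$ is positive. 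Applying the hypothesis to $u$ and using $I_{\mu}\simeq I$ yields $\|Q_{I}f\|_{Z}\preceq\|f\|_{\bar{X}}$ for every such $f$, and a standard density argument extends this to the full boundedness of $Q_{I}\colon\bar{X}\to Z$.

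With this boundedness in hand, the conclusion $\|g_{\mu}^{\ast}\|_{Z}\preceq\|g_{\mu}^{\ast}(t)I(t)/t\|_{\bar{X}}$ follows for arbitrary $g$ directly from the second statement of Theorem~\ref{t2}, since any extended $p$-sub-exponential law admits $I$ as a convex isoperimetric estimator. The main obstacle in the plan is the opening step: ensuring, with precise constants, that the specific one-dimensional measure $(1/Z_{p})e^{-|s|^{p}}ds$ has isoperimetric profile of the claimed form and that its $n$-fold product preserves this form up to a multiplicative constant which can be absorbed into $C_{\mu^{n}}$. Once these two tensorization facts are extracted from \cite{CGGR} and \cite{FPR}, the rest of the argument is a verbatim transcription of the Cauchy proof, with only routine technicalities (Lipschitz regularity of $u$ obtained by a preliminary smooth truncation of $f$, and a density step on the class of admissible $f$'s) remaining to be handled.
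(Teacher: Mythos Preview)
Your proposal is correct and follows essentially the same route as the paper's proof: the paper also picks the one-dimensional measure $d\mu(s)=Z_p^{-1}e^{-|s|^{p}}ds$, cites \cite[Proposition~5.25]{CGGR} for its profile and for the tensorized estimator of $\mu^{n}$, builds the same test function $u(x)=F(H(x_{1}))$, reads off $|\nabla u|_{\mu^{n}}^{\ast}=f^{\ast}$ and $u_{\mu^{n}}^{\ast}=Q_{I_{\mu}}f$, and then invokes $I_{\mu}\simeq I$ together with Theorem~\ref{t2}. The only cosmetic difference is that the paper records the tensorized estimator with $\log\frac{n}{\min(t,1-t)}$ inside, which is of course equivalent to your $I$ up to a constant depending only on $n$.
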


\begin{proof}
Let $\mu$ be the probability measure on $\mathbb{R}$ with density%
\[
d\mu(s)=\frac{e^{-\left|  s\right|
^{p}}}{Z_{p}}ds=\varphi(s)ds,\text{
\ \ \ }s\in\mathbb{R}\text{.}%
\]
Its isoperimetric profile is (see \cite[Proposition 5.25]{CGGR})
\[
I_{\mu}(t)=\varphi\left(  H^{-1}(t\right)  )=c_{p}\min\left(
t,1-t\right) \left(  \log\frac{1}{\min\left(  t,1-t\right)  }\right)
^{1-1/p},\text{ \ \ \ }t\in\lbrack0,1],
\]
where $H$ is the distribution function of $\mu$, i.e. $H:\mathbb{R}%
\rightarrow(0,1)$ is  defined by
\[
H(r)=\int_{-\infty}^{r}\varphi(t)dt.
\]
Consider on $\mathbb{R}^{n}$ the product measure $\mu^{n}$, by
Proposition 5.25 of \cite{CGGR}, there exists a positive constant
$c$ such that the
function%
\[
I(t)=c\min\left(  t,1-t\right)  \left(  \log\frac{n}{\min\left(
t,1-t\right) }\right)  ^{1-1/p},
\]
is a convex isoperimetric estimator of $\mu^{n}$

Let $f$ be a\ positive measurable function $f\ $with $suppf$
$\subset(0,1/2),$ consider
\[
F(t)=\int_{t}^{1}f(s)\frac{ds}{I_{\mu}(s)},\text{ \ \ \ }t\in(0,1),
\]
and define
\[
u(x)=F(H(x_{1})),\text{ \ \ \ \ \ }x\in\mathbb{R}^{n}.
\]
Using the same method that in Theorem \ref{alph}, we obtain
\[
\left|  \nabla u\right|  _{\mu^{n}}^{\ast}(t)=f^{\ast}(t) \text{ and
} u_{\mu^{n}}^{\ast}(t)=\int_{t}^{1}f(s)\frac{ds}{I_{\mu}(s)}.
\]
Since $m(u)=0$, by the hypothesis we get
\begin{align*}
\left\|  \int_{t}^{1}f(s)\frac{ds}{I_{\mu}(s)}\right\|  _{Z} &
=\left\|
u_{\mu^{n}}^{\ast}\right\|  _{Z}\\
&  \leq C_{\mu^{n}}\left\|  \left|  \nabla f\right|
_{\mu^{n}}^{\ast
}\right\|  _{\bar{X}}\\
&  =C_{\mu^{n}}\left\|  f^{\ast}(t)\right\|  _{\bar{X}}\\
&  =C_{\mu^{n}}\left\|  f\right\|  _{\bar{X}}.
\end{align*}
Finally, from
\[
I_{\mu}(t)\simeq I(t)
\]
we have that
\[
\left\|  Q_{I}f\right\|  _{Z}\preceq\left\|  f\right\|  _{\bar{X}}.
\]
and Theorem \ref{t2} applies.
\end{proof}

\subsection{Weighted Riemannian manifold with negative dimension.}
Let $\left(  M^{n},g,\mu\right)  $ be a $n-$dimensional weighted
Riemannian manifold ($n\geq2)$ that satisfies the $CD(0,N)$
curvature condition with $N<0.$ (See \cite[Secction 5.4]{Mil}).

A convex isoperimetric estimator  is given by
\[
I(t)=\min(t,1-t)^{-1/N}.
\]

Obviously for $0<t<1/2,$ we have%
\[
\int_{t}^{1/2}\frac{ds}{s^{-1/N}}\preceq\frac{t}{t^{-1/N}}.
\]

Thus by Theorem \ref{t2}, given a r.i. space $X$ on $\mathbb{R}^{n}$
we get
\[
\inf_{c\in\mathbb{R}}\left\|  \left(  g-c\right)  _{\mu}^{\ast}\frac
{\min(t,1-t)^{-1/N}}{t}\right\|  _{\bar{X}}\preceq\left\| \left|
\nabla g\right|  \right\|  _{X},\text{ \ \ \ }\left(  g\in Lip(\mathbb{R}%
^{n})\right)  .
\]

In particular if $1\leq p<\infty,$ $1\leq q\leq\infty)$ and
$X=L^{p,q}$, then for all $f\in Lip(\mathbb{R}^{n})$ positive with
$m(f)=0,$ ($1\leq p<\infty,$ $1\leq q\leq\infty)$

\[
\left\|  f\right\|  _{\gamma,q}\preceq\left\|  \left|  \nabla f\right|  \right\|  _{p,q}.
\]
where $\gamma=\frac{Np}{N-p(N+1)}$  for any $p,q$ satisfying
$\frac{N}{N-1}\leq p\leq -N$ and
$\frac{1}{q}=\frac{1}{p}-\frac{1}{N}-1.$

And by Theorem \ref{nash}, we have that
\[
\left\|  f\right\|  _{p,q}\preceq\left\|  \left|  \nabla f\right|  \right\|
_{p,q}^{\frac{\beta}{\beta+1}}\left\|  f\right\|  _{s,\infty}^{\frac{1}%
{\beta+1}}%
\]
where $s>p$ and $\beta=\alpha(\frac{1}{p}-\frac{1}{s}).$

\end{document}